\numberwithin{equation}{section}
\def\D{{\mathbb D}}  \def\T{{\mathbb T}}
\def\C{{\mathbb C}}  \def\N{{\mathbb N}}
\def\({\left(}       \def\){\right)}
\newtheorem*{theorem A}{Theorem A}
\newtheorem*{theorem B}{Theorem B}
\newtheorem*{theorem*}{Theorem}
\newtheorem{theorem}{Theorem}[section]
\newtheorem{lemma}[theorem]{Lemma}
\newtheorem{proposition}[theorem]{Proposition}
\theoremstyle{definition}
\newtheorem{definition}[theorem]{Definition}
\newtheorem{example}[theorem]{Example}
\theoremstyle{remark}
\newtheorem{remark}[theorem]{Remark}
\numberwithin{equation}{section}
\DeclareMathOperator*{\esssup}{ess\,sup}
\newcommand{\doi}[1]{\url{doi.org/#1}}
\begin{document}
\title[Closed Range]{Inequalities on Tent Spaces and Closed Range Integration Operators on spaces of Average Radial Integrability}
\author[T. Aguilar-Hern\'andez ]{Tanaus\'u Aguilar-Hern\'andez}
\address{Departamento de Matem\'atica Aplicada II and IMUS, Escuela T\'ecnica Superior de Ingenier\'ia, Universidad de Sevilla,
Camino de los Descubrimientos, s/n 41092, Sevilla, Spain}
\email{taguilar@us.es}

\author[P. Galanopoulos]{Petros Galanopoulos}
\address{Department of Mathematics, Aristotle University of Thessaloniki, 54124, Thessaloniki, Greece}
\email{petrosgala@math.auth.gr }

\subjclass[2020]{Primary 47G10, 28A; Secondary 30H, 30H10, 30H20, 30H30 46E15}

\date{\today}

\keywords{Radial integrability, tent spaces, reverse Carleson measures, closed range integration operators}

\thanks{This research was supported in part by Ministerio de Universidades: "Margarita Salas" grant (Funded by the Spanish Recovery, Transformation and Resilience Plan, and European Union - NextGenerationEU), and and Junta de Andaluc\'ia, P20-00664.}

\maketitle

\vspace{-0.2cm}
\begin{abstract}
We deal with a reverse Carleson measure inequality for the tent spaces of analytic functions in the unit disc $\mathbb{D}$ of the complex plane. The tent spaces of measurable functions were introduced by Coifman, Meyer and Stein.
 
Let $1\leq p,q < \infty$ and consider the positive Borel measure $d\mu(z) = \chi_{G}(z)\frac{dm(z)}{(1-|z|)}$  defined in terms of a measurable set $G \subseteq \D $ and of the area Lebesgue measure $dm(z)$ in $\mathbb{D}$. We prove a necessary and sufficient condition on $G$ in order to exist a constant $K>0$ such that
\begin{align*}
	\int_{\T} \left(\int_{\Gamma(\xi)} |f(z)|^{p}\ d\mu(z) \right)^{q/p}\ |d\xi|\geq K \,\int_{\T} \left(\int_{\Gamma(\xi)} |f(z)|^{p}\ \frac{dm(z)}{1-|z|}\right)^{q/p}\ |d\xi|,
\end{align*}
for any $f$ analytic in $\mathbb{D}$ with the property, the right term of the inequality above is finite. Here $\mathbb{T}$ stands for the unit circle and $\Gamma(\xi)$ is a non-tangential region with vertex at $\xi \in \mathbb{T}$\,.

 This work extends the study of D. Luecking on Bergman spaces to the analytic tent spaces. We apply our result to characterize the closed range property of the integration operator known as Pommerenke operator when acting on the average radial integrability spaces. T. Aguilar-Hern\'andez, M. Contreras and L. Rodr\'iguez-Piazza  introduced these spaces for the first time in the literature. The Hardy and the Bergman spaces form part of this family. 
\end{abstract}

\tableofcontents

\section{Introduction}
Let $\mathcal{H(\mathbb D)} $ be the class of analytic functions in the unit disc $\mathbb{D}$  of the complex plane and
  $\mathbb{X} \subset \mathcal{H(\mathbb D)}$ be a Banach space. Consider a bounded linear operator
    $$ Q: \mathbb{X} \to \mathbb{X}\,. $$ 
  We call $Q$ bounded below 
    if there is a positive constant $C$ such that 
\begin{equation}\label{defbb}
	\|Q(f)\|_{\mathbb{X}}\,\, \geq \,\,C\,\,\,\| f\|_{\mathbb{X}}\,\,\,,\qquad f \in\mathbb{X}\,.
\end{equation}

An operator that fulfills property (\ref{defbb}) has closed range and it is not compact. Conversely, if $Q$ is an one-to-one operator with closed range then it is bounded below. 
Typical examples of operators for which the closed range property is equivalent to (\ref{defbb}) are the composition operators (see\cite{Cowen_McCluer}).
In addition, the same is true for the integration operator  
\begin{equation*}
	T_{g}(f)(z)\,=\, \int_0^z\, f(\zeta)\,g'(\zeta)\,d\zeta	
	\,\,,\,\,\qquad z\in \mathbb{D}
\end{equation*}
and its companion
\begin{equation*}
	S_{g}(f)(z)\,=\, \int_0^z\, f'(\zeta)\,g(\zeta)\,d\zeta	\,\,,\,\,\qquad z\in \mathbb{D}\,.
\end{equation*}
The function $g\in \mathcal{H}(\mathbb{D})$ is called the symbol of the operators. The operator $T_g$ is also known as Pommerenke operator.
Condition $(\ref{defbb})$, when $T_g$ is acting boundedly on a Banach space $\mathbb X$ of analytic functions in $\mathbb D$, is 
equivalent to the closed range property. 
This is also true for $S_g$ under the following modification. It is clear by its definition that $S_g$ sends the constant functions to the zero function. Therefore, it makes sense to consider $S_g$ on
$\mathbb {X}/\mathbb C$. Otherwise, it is not one-to-one. Then, we can state that 
$S_g$ is bounded below on $\mathbb {X}/\mathbb C$ if and only if it has closed range in $\mathbb {X}/\mathbb C$\, (see \cite{Anderson}\,). 
Consequently, it is natural to look for those symbols $g$ that introduce bounded below operators on Banach spaces of analytic functions.
 
 The operators $T_g, S_g$ are closely related to the pointwise multiplication operator $$M_g(f)\,=\, f \cdot\,g\,\,, \qquad f\in \mathbb{X},$$
since 
\begin{equation}\label{conection}
	M_g(f)\,=\, f(0)g(0)+ T_g(f) +S_g(f)\,\,, \qquad f\in \mathbb{X}. 
\end{equation}

Luecking, in \cite{Luecking_1981}, considered (\ref{defbb}) for the
$M_g$ in the case of the Bergman spaces. We say that an $f \in \mathcal{H}(\mathbb{D})$ belongs to the Bergman space $A^p\,,\,\, p\geq 1$\,, if 
\begin{align}
	\| f \|_{A^p}^p\,=&\, \int_{\mathbb{D}}\,\, |f(z)|^p\, dm(z) \,\nonumber\\
	\asymp & |f(0)|^p + \int_{\mathbb{D}}\, |f'(z)|^p \, (1-|z|^2)^p\, dm(z) <\infty,\,\label{Bergmandef2}
\end{align}
where $dm(z)$ stands for the normalized area Lebesque measure in the unit disc. 
Through out this work the notation $A \asymp B$ states that  
 there are positive  constants $k_1, k_2$ such that
$k_1\, A \leq B \leq k_2\, A$. If one of the inequalities is true then we use the symbol $ \lesssim $\,.
Let now a $g\in H^{\infty}$, that is a bounded analytic function in $\mathbb{D}$, since this is exactly the class of symbols that introduces a bounded multiplication operator in Bergman spaces.
Under this assumption, Luecking proved that $M_g$ is 
bounded below in $A^p$ if and only if  there is an $\eta \in (0,1)$, a $\delta >0$ and 
a $ c>0$ such that
\begin{equation}\label{main condition}
	m(G_c\cap \Delta(\alpha,\eta)) \geq\, \delta \, m( \Delta(\alpha,\eta))\,\,, \qquad \forall \alpha \in \mathbb{D}\,,
\end{equation}
where  
\begin{equation}\label{levelset1}
	G_c\,=\,\{ z \in \mathbb D : |g(z)|>c \}\,.
\end{equation}
The set
$$
\Delta(\alpha,\eta) = \left\{ z\in \mathbb{D} \, : \, \,|\phi_{\alpha}(z)|=\left|\frac{\alpha - z}{1-\bar{\alpha} z}\right|\,< \, \eta \right\}
$$
 is the pseudohyperbolic disc with "center" $\alpha$ and "radius" $\eta$\,.	
 It is clear that $z\in \Delta (\alpha, \eta) $ if and only if $\alpha \in \Delta (z,\eta)$. Moreover,  
 \begin{equation}\label{psh compar}
 (1-|\alpha|) \asymp (1-|z|)\,\,, \quad \forall z\in \Delta (\alpha, \eta)\,,
\end{equation}
 and 
  \begin{equation}\label{psh area}
  m( \Delta (\alpha, \eta)) \asymp (1-|\alpha|)^2 \,\,.
\end{equation}
The constants involved in (\ref{psh compar}) and (\ref{psh area}) depend on $\eta$\, (see, e.g., \cite{duren_schuster_2004} for more information about the pseudohyperbolic metric in $\mathbb D$ and its properties).
 
 However, the characterization of $M_g$ as a bounded below operator on $A^p$ is actually a consequence of the main result of \cite{Luecking_1981}  according to which,  condition (\ref{main condition}) is equivalent to
 a reverse Carleson measure type inequality for Bergman spaces. The following is presented as "Main Theorem" in \cite[p. 2]{Luecking_1981}.
 
 \begin{theorem A}
 	Let $G$ be a measurable subset of $\mathbb{D}$ and $p>0$. There is a constant $K>0$ such that 
\begin{equation}\label{main inequality}
	\int_{G}\,\,|f(z)|^p\, dm(z) \geq \,K\, \int_{\mathbb{D}}\,\,|f(z)|^p\, dm(z)\,,\quad\quad f\in A^p
\end{equation}
if and only if there is a $\eta \in (0,1)$ and a $\delta>0$ such that
 (\ref{main condition})
is true.
 \end{theorem A}

Employing the Bergman space norm, stated
in terms of the derivative 
as in (\ref{Bergmandef2}),
 and Theorem A 
 we  can also answer 
(\ref{defbb}) for a bounded $S_g$ in $\mathbb{X} = A^p/\mathbb{C}$
and for a bounded $T_g$ in $\mathbb{X} = A^p$. We answer the problem in terms of condition (\ref{main condition}) on the sets (\ref{levelset1})
and on the sets
\begin{equation*}
G_c\,=\,\{ z \in \mathbb D : |g'(z)| \,(1-|z|)\,>c \}
\end{equation*}
respectively. We recall that $S_g$ is bounded on $A^p$ if and only if $g\in H^{\infty}$.
On the other hand, $T_g$ is bounded if and only if the symbol $g$ belongs to the Bloch space
that is 
\begin{equation*}
	\|f\|_{\mathcal{B}}= |g(0)|+ \sup_{z\in \mathbb{D}}\,|g'(z)|\,(1-|z|^2) < \infty\,
\end{equation*} 
(see \cite{AS2}).

Anderson, in \cite{Anderson},
proved that in the case of the Bloch, the BMOA  and the Hardy spaces
$H^p$  
 only the constant symbols introduce a bounded below operator $T_g$. The BMOA consists of those $f\in \mathcal{H}(\mathbb{D})$ that have boundary values almost every where in the unit circle and those boundary values define a square-integrable function on the unit circle with bounded mean oscillation. We will not define Hardy spaces at this point. In the theory of spaces of analytic functions, 
 the Bloch and the BMOA space stand for the natural limit case, as $p$ grows to infinity, of the Bergman and  Hardy spaces respectively.
More information about the Bloch and the BMOA space can be found in \cite{Zhu}, \cite{Gi}.

Based on the above, someone may come down to the conclusion that (\ref{defbb}) for $S_g$,  when acting on the Bloch, BMOA and Hardy spaces,  gets trivial answer too.
On the contrary,  Anderson in \cite{Anderson} proves that this not the case when $\mathbb{X}$ is
the Hilbert Hardy space $H^2$.  Due to the fact that $H^2$ can be described equivalently in terms of the  Littlewood-Paley identity as
\begin{align*}
	\|f\|^2_{H^2}\,= & \sup_{r\in (0,1)}\,\int_{0}^{2\pi} |f(re^{i\theta})|^2\,d\theta\\
	\asymp & \,|f(0)|^2\,+\,\int_{\mathbb{D}}\, |f'(z)|^2\,(1-|z|)\,dm(z) <\infty\,,
\end{align*}
the author is able to apply the technique used in the case of Bergman spaces  and completely answer the problem as follows. Let a $g\in H^{\infty}$, that is, consider  
$S_g$ to be bounded in $H^2/\mathbb{C}$. Anderson proves that condition (\ref{main condition}) on the sets (\ref{levelset1})
is necessary and sufficient in order for $S_g$ to be bounded below in $H^2/\mathbb{C}$.
 
 Looking carefully at the result of the $H^2$ space and that of Bergman spaces, we can immediately state that $S_g$ is bounded below in $H^2/\mathbb{C}$ if and only if  $S_g$ and $M_g$ are bounded below in $A^{p}/\mathbb{C}$ and $A^{p}$, respectively. However, it is very interesting the following fact. Saying that $S_g$ is bounded below in $H^2/\mathbb{C}$ is not equivalent to the boundedness from below of $M_g$ in $H^2$. The latter is true if and only is the radial limit function of $g\in H^\infty$ is essentially bounded away from $0$ on $\T$, which is a weaker condition (see  \cite[p. 93]{Anderson}).
 
Anderson goes further. He precisely characterizes the bounded functions under discussion. The functions $g\in H^{\infty}$, for which (\ref{defbb}) is true for $S_g$ in $H^2/\mathbb{C}$, are exactly those that can be factored as 
$$
g=B\cdot F\,,
$$  
where $B$ is a finite product of interpolating Blaschke products
and $F,\,1/F\in H^{\infty}$\,.  In its turn this is equivalent to the existence of an $r>0$ and a $\eta \in (0,1)$ such that 
$$
\sup_{z\in \Delta(\alpha,\eta)}\,|g(z)|\,> r\,\,,\qquad \forall \alpha \in \mathbb{D}\,.
$$
Finally, he establishes that the boundedness from below of $S_g$ in the $ \mathcal{B}/\mathbb{C}$
is characterized by the same $g$ as those in the Hardy case (see \cite[Theorem 3.9]{Anderson}). 

Recently, Panteris  extended  the result of Anderson about $S_g$  on $H^2/\mathbb{C}$ to any Hardy space $H^p/\mathbb C$, $p>1\,$ \cite{Panteris}. His approach is based on the use of the following equivalent characterization of $H^p$ due to  Calderon (see \cite[Theorem 1.3]{Pavlovic}):
 \begin{align*}
	\|f\|^p_{H^p}\,= & \sup_{r\in (0,1)}\,\int_{0}^{2\pi} |f(re^{i\theta})|^p\,d\theta\\
	\asymp & \,|f(0)|^p \,+\,\int_{\mathbb{T}}\,\(\int_{\Gamma(\xi)}\, |f'(z)|^2\,\,dm(z)\)^{p/2}\,|d\xi| \,.
\end{align*}
The symbol $\mathbb{T}$ stands for the unit circle and $\Gamma(\xi)$  
is an angular region 
 with vertex at $\xi \in \mathbb{T}$.
 Using appropriately this tent space expression, Panteris arrived to the same conclusion as Anderson. He also deals with the case $\mathbb{X}$= BMOA and that of the Besov spaces.

Our aim is to extend the study of property
(\ref{defbb}) for $T_g$ and $S_g$ to the more general setting of the $RM(p,q)$ spaces,\, $1\leq p,q<\infty$. 
The first author, M. Contreras and L. Rodríguez-Piazza, motivated by the property of bounded radial integrability 
\begin{equation}\label{BAI}
\sup_{\theta\in [0,2\pi]}\, \int_{0}^{1}\,|f(re^{i\theta})|\,dr \,<\infty\,,
\end{equation}
introduced in \cite{Aguilar-Contreras-Piazza} the spaces of average radial integrability $RM(p,q)$. 

Let $1\leq p,q <\infty$, 
we say that an $f\in \mathcal{H}(\mathbb{D})$ belongs to $RM(p,q)$ if  
\begin{equation*}
	\begin{split}
	\|f\|^q_{RM(p,q)} = &\frac{1}{2\pi}\int_{0}^{2\pi} \left(\int_{0}^{1} |f(r e^{i \theta})|^p \ dr \right)^{q/p}d\theta  \,<\infty\,.\\
	\end{split}
\end{equation*}
Among them  we can identify the Bergman spaces when $p=q$. This is justified only by a change in the order of integration. 
When $p\neq q $, an application of Holder's inequality leads to the conclusion that each $RM(p,q)$ space is located in between two Bergman spaces. 
Furthermore, when $p=\infty$, the $RM(\infty,q)$ spaces are the Hardy spaces $H^q$. Condition (\ref{BAI}), in terms of the notation used for the spaces of average radial integrability, can be visualized as the limit case $RM(1,\infty)$ and, according to Fejer-Riesz inequality, (\ref{BAI})  is satisfied by all $f\in H^1$ (see \cite{Du}). In \cite{Aguilar-Contreras-Piazza} it is proved that $H^s \subseteq RM(p,q)$, $1\leq p,q<\infty$ 
if and only if $1/s \leq 1/p +1/q\,.$
On the other hand, the $RM(p,q)$ spaces can be identified to 
the analytic tent spaces 
$$AT^q_p=\mathcal{H(\mathbb{D})}\cap T^q_p\,\,,\, $$ for $1\leq p,q<\infty$ (see \cite{Aguilar-Galanopoulos}). The tent spaces $T^q_p$, $1\leq p,q<\infty$, consist of those measurable functions $f$ in $\mathbb{D}$ that 
$$
\|f\|^q_{T^q_p} = \int_{\mathbb{T}}\,
\(\int_{\Gamma(\xi)}\, |f(z)|^p\,\,\frac{dm(z)}{1-|z|}\)^{q/p}\,|d\xi| <\infty\,.
$$
We will 
provide all the information needed about $RM(p,q)$ and $AT_p^q$ in the next section.

Taking into account all the above, it is natural to consider problem (\ref{defbb}) for the $S_g$, $T_g$ and $M_g$ operators when acting on the $RM(p,q)$ spaces. The key, in order to  answer this question, is the following
analogue of (\ref{main inequality}) for the $AT^q_p$\, spaces. This is our main result.
\begin{theorem}\label{Main Theorem}
	Let $1\leq p,q<+\infty$ and $G\subset \D$ be a measurable set. The following assertions are equivalent:
	\begin{enumerate}
		\item[(a)] There is a constant $K>0$ such that
		\begin{align*}
			\left(\int_{\T} \left(\int_{\Gamma(\xi)\cap G} |f(z)|^{p}\ \frac{dm(z)}{1-|z|}\right)^{q/p}\ |d\xi|\right)^{1/q}\geq K \|f\|_{T_{p}^{q}}\,,\quad\quad f\in AT_{p}^{q}\,.
		\end{align*}
		\item[(b)] There exist an $\eta \in (0,1)$ and a $\delta>0$ such that
		\begin{align*}
			m(G\cap \Delta(\alpha,\eta))\,\geq \,\,\delta \,\, m(\Delta(\alpha,\eta))
		\end{align*}
		for all $\alpha \in \D$.
	\end{enumerate}
\end{theorem}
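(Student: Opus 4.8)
The plan is to transplant Luecking's two–step strategy for Theorem~A from the Bergman setting (where $p=q$ and Fubini collapses the outer integral) to the genuinely two–parameter tent spaces; the new difficulty is the outer $L^{q/p}$ integration over $\T$, which no longer commutes with the inner cone integral. I would use three tools: a local reverse--H\"older estimate of Luecking type on pseudohyperbolic discs, the change--of--aperture equivalence of tent--space quasinorms, and concentration estimates for the standard test functions. Both directions reduce to these, and I would isolate them as separate lemmas — these are precisely the ``inequalities on tent spaces'' announced in the title.

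\emph{Sufficiency, (b)$\Rightarrow$(a).} First I establish the local estimate: if $m(G\cap\Delta(\alpha,\eta))\geq\delta\,m(\Delta(\alpha,\eta))$ for all $\alpha$, then there is $c=c(p,\delta,\eta)>0$ with
\[
\int_{G\cap\Delta(\alpha,\eta)}|f(z)|^p\,dm(z)\;\geq\;c\int_{\Delta(\alpha,\eta/2)}|f(z)|^p\,dm(z)
\]
for every $f\in\mathcal{H}(\D)$ and every $\alpha\in\D$; this is proved as in \cite{Luecking_1981}, normalising the right--hand integral and using the subharmonicity of $|f|^p$ together with the bounded--overlap geometry of the discs. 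Next I fix a covering $\{\Delta(\alpha_k,\eta/2)\}$ of $\D$ with bounded overlap for which $\{\Delta(\alpha_k,\eta)\}$ also overlaps boundedly, and observe, via \eqref{psh compar}, that $z\in\Delta(\alpha_k,\eta/2)\cap\Gamma(\xi)$ forces $\Delta(\alpha_k,\eta)$ into a fixed wider cone $\Gamma_\beta(\xi)$. Summing the local estimates over the $k$ meeting $\Gamma(\xi)$ and inserting the weight $1/(1-|z|)\asymp 1/(1-|\alpha_k|)$ gives, pointwise in $\xi$,
\[
\int_{\Gamma(\xi)}|f|^p\,\frac{dm(z)}{1-|z|}\;\leq\;C\int_{\Gamma_\beta(\xi)\cap G}|f|^p\,\frac{dm(z)}{1-|z|}.
\]
Raising to the power $q/p$ and integrating $|d\xi|$ turns this into $\|f\|_{T_p^q}^q\leq C'\,\|\chi_G f\|_{T_p^q,\beta}^q$; the change--of--aperture lemma for tent spaces (Coifman--Meyer--Stein) then replaces the aperture $\beta$ by the standard one, which is exactly~(a).

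\emph{Necessity, (a)$\Rightarrow$(b).} I argue by contraposition, using that the failure of (b) furnishes, for every $\eta\in(0,1)$ and every $\delta>0$, a point $\alpha\in\D$ with $m(G\cap\Delta(\alpha,\eta))<\delta\,m(\Delta(\alpha,\eta))$. I test (a) with the standard functions $f_\alpha(z)=(1-|\alpha|^2)^{s}(1-\bar\alpha z)^{-t}$, the exponents chosen so that $f_\alpha\in AT_p^q$ and, on $\Delta(\alpha,\eta)$, $|f_\alpha|^p$ is comparable to its mean over that disc. The aim is to force the ratio of the $G$--restricted tent quasinorm of $f_\alpha$ to $\|f_\alpha\|_{T_p^q}$ to be arbitrarily small. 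Splitting each inner cone integral into its part over $\Delta(\alpha,\eta)$ and the remainder, the disc part carries $G$--mass at most $\delta\,m(\Delta(\alpha,\eta))$ and so contributes only a $\delta$--fraction, whereas the remainder — both the tails of near cones and the cones whose vertex lies far from $\alpha/|\alpha|$ — is, by a concentration estimate for $f_\alpha$, at most a fraction $\varepsilon(\eta)$ of the total tent energy, with $\varepsilon(\eta)\to0$ as $\eta\to1$. Hence the said ratio is $\lesssim(\delta+\varepsilon(\eta))^{1/q}$, which the failure of (b) drives to $0$ by letting $\eta\to1$ and $\delta\to0$; this contradicts (a).

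The hard part is in all cases the outer $L^{q/p}$ layer. In the sufficiency direction the local comparison unavoidably leaks mass into a wider cone, so the whole argument hinges on the change--of--aperture equivalence; in the necessity direction the delicate point is the concentration estimate over the \emph{full} tent norm, including the cones with vertex far from $\alpha/|\alpha|$, which must be shown negligible uniformly in $\alpha$ and is where the genuinely two--parameter nature of $T_p^q$ enters. I expect these two tent--space inequalities to be the technical core, with everything else following the template of \cite{Luecking_1981}.
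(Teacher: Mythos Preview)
Your necessity direction (a)$\Rightarrow$(b) is essentially the paper's argument. Both hinge on a concentration lemma for the test functions $f_\alpha$ asserting that $\|f_\alpha\chi_{\Delta(\alpha,\eta)^c}\|_{T_p^q}$ can be made small by taking $\eta$ close to $1$ (the paper's Lemma~\ref{Basic Lemma}), together with the estimate $\|f_\alpha\chi_{G\cap\Delta(\alpha,\eta)}\|_{T_p^q}\lesssim\bigl(m(G\cap\Delta(\alpha,\eta))/m(\Delta(\alpha,\eta))\bigr)^{1/p}$. Whether this is phrased as a direct argument or by contraposition is cosmetic; you have correctly identified the concentration estimate as the substantive point.

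The gap is in your sufficiency direction. The local inequality
\[
\int_{G\cap\Delta(\alpha,\eta)}|f|^p\,dm\;\geq\;c\int_{\Delta(\alpha,\eta/2)}|f|^p\,dm,
\]
uniformly in $f\in\mathcal H(\D)$ and $\alpha\in\D$, is \emph{not} what subharmonicity and bounded overlap give you, and it is not how \cite{Luecking_1981} proceeds either. Subharmonicity bounds point values from above by full-disc averages; it says nothing about the integral over a proper subset $G\cap\Delta(\alpha,\eta)$, since $|f|$ may well be small on $G$ and large on its complement. Luecking's actual mechanism --- and the one the paper transplants to cones --- is a good/bad-set decomposition depending on $f$: one introduces an exceptional set $B=B(f,\varepsilon)$ of points $\alpha$ where $|f(\alpha)|^p$ is tiny compared to a suitable local average, proves that the contribution of $B$ to the cone integral is an $\varepsilon$-fraction of the whole (the paper's Lemmas~\ref{L1} and~\ref{L2}), and on the complement of $B$ obtains the \emph{pointwise} bound
\[
|f(\alpha)|^p\;\lesssim\;\frac{1}{m(\Delta_\eta(\alpha))}\int_{G\cap\Delta_\eta(\alpha)}|f|^p\,dm
\]
via the level set $E_\lambda(\alpha)$ and Remark~\ref{R2}. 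One then integrates this over $\Gamma_{1/2}(\xi)\setminus B$, applies Fubini, absorbs the bad-set piece using the lemmas, and closes with Minkowski and change of aperture. The smallness of the bad set is inherently a \emph{global} statement (measured against the full cone integral), so it does not localize to a single pseudohyperbolic disc; in particular it does not yield your disc-by-disc inequality. If you attempt to run your covering argument with the pointwise good-set estimate in place of the local integral estimate you will find yourself carrying the bad set through the whole computation --- which is exactly the paper's proof.
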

Now, we are in position to characterize the closed range operators $T_g$, $S_g$ and $M_g$ as follows.
\begin{theorem} \label{CRTg}
	Let $1\leq p,q<\infty$ and $g\in \mathcal{B}$. The operator $T_g: RM(p,q) \rightarrow RM(p,q)$ has closed range if and only if there is an $\eta \in (0,1)$, a $\delta>0$  and a $c>0$ such that
	\begin{align}\label{conditionTg1}
		m(G_c\cap \Delta(\alpha,\eta))\geq \delta m(\Delta(\alpha,\eta)),\quad \alpha\in\D,
	\end{align}
	where $G_c=\{z\in\D\ :\ |g'(z)|(1-|z|)>c\}$.
\end{theorem}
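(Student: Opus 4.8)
The plan is to reduce the closed-range property of $T_g$ to the reverse Carleson inequality of Theorem~\ref{Main Theorem} applied to the level sets $G_c$. The starting point is the derivative description of the $RM(p,q)$ norm coming from the identification of $RM(p,q)$ with $AT_p^q$: for $h\in\mathcal{H}(\D)$,
\[
\|h\|_{RM(p,q)}\asymp |h(0)|+\bigl\|h'(z)(1-|z|)\bigr\|_{T_p^q}.
\]
Since $(T_g f)'(z)=f(z)g'(z)$ and $T_g f(0)=0$, applying this to $h=T_g f$ gives
\[
\|T_g f\|_{RM(p,q)}^q\asymp \int_{\T}\left(\int_{\Gamma(\xi)}|f(z)|^p\,|g'(z)(1-|z|)|^p\,\frac{dm(z)}{1-|z|}\right)^{q/p}|d\xi|.
\]
Thus $T_g$ is bounded below — equivalently, has closed range, as recorded for $T_g$ in the introduction — if and only if the weighted tent expression, with weight $w(z)=|g'(z)(1-|z|)|^p$, dominates $\|f\|_{T_p^q}^q$ for every $f\in AT_p^q$. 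Because $g\in\mathcal{B}$, this weight is bounded, $w(z)\le M^p$ with $M=\|g\|_{\mathcal B}$, and the whole argument consists in squeezing the bounded weight between multiples of $\chi_{G_c}$ and a constant.

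For sufficiency I would assume (\ref{conditionTg1}) for some $\eta,\delta,c$. On $G_c$ one has $w\ge c^p\chi_{G_c}$, so pointwise in $\xi$ the weighted inner integral dominates $c^p\int_{\Gamma(\xi)\cap G_c}|f|^p\,\frac{dm}{1-|z|}$. Raising to the power $q/p$, integrating in $\xi$, and invoking the implication (b)$\Rightarrow$(a) of Theorem~\ref{Main Theorem} for the set $G_c$ yields $\|T_g f\|_{RM(p,q)}^q\gtrsim c^qK^q\|f\|_{T_p^q}^q$, so $T_g$ is bounded below.

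For necessity I would start from the bounded-below inequality and use instead the elementary bound $w\le M^p\chi_{G_c}+c^p$, which gives, pointwise in $\xi$,
\[
\int_{\Gamma(\xi)}|f|^p w\,\frac{dm}{1-|z|}\ \le\ M^p\,A(\xi)+c^p\,B(\xi),
\]
where $A(\xi)=\int_{\Gamma(\xi)\cap G_c}|f|^p\frac{dm}{1-|z|}$ and $B(\xi)=\int_{\Gamma(\xi)}|f|^p\frac{dm}{1-|z|}$, so that $\int_\T B^{q/p}|d\xi|\asymp\|f\|_{T_p^q}^q$. Here I would separate the two regimes, using $(a+b)^{q/p}\le 2^{q/p-1}(a^{q/p}+b^{q/p})$ when $q\ge p$ and the subadditivity $(a+b)^{q/p}\le a^{q/p}+b^{q/p}$ when $q<p$. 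After integrating in $\xi$ and moving the $B$-term to the left, choosing the threshold $c$ small enough that its coefficient in front of $\int_\T B^{q/p}|d\xi|$ stays strictly below the bound-below constant produces some $K'>0$ with $\int_\T A^{q/p}|d\xi|\ge (K')^q\int_\T B^{q/p}|d\xi|$. This is exactly condition (a) of Theorem~\ref{Main Theorem} for the set $G_c$, and the converse implication (a)$\Rightarrow$(b) of that theorem delivers (\ref{conditionTg1}) for this $c$.

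I expect the necessity step to be the main obstacle: one must pass from a reverse inequality carrying the bounded weight $w$ to one for the bare characteristic function $\chi_{G_c}$, and this hinges on calibrating the threshold $c$ against the bound-below constant while exploiting $w\le M^p$, with the case distinction on whether $q\ge p$ handled by the appropriate elementary convexity inequality. The other point deserving care is the derivative norm description itself, since it is what makes the entire reduction to $w=|g'(z)(1-|z|)|^p$ possible; it is, however, available from the identification of $RM(p,q)$ with $AT_p^q$ recorded in the preliminaries.
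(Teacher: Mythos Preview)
Your proposal is correct and follows essentially the same route as the paper: both directions reduce to Theorem~\ref{Main Theorem} via the Littlewood--Paley/tent-space description of $\|T_g f\|_{RM(p,q)}$, with sufficiency coming from $w\ge c^p\chi_{G_c}$ and necessity from splitting $\Gamma(\xi)$ into $G_c$ and its complement and absorbing the small-$c$ term. The only cosmetic difference is that you make the elementary inequality $(a+b)^{q/p}\lesssim a^{q/p}+b^{q/p}$ explicit with a case distinction on $q\gtrless p$, whereas the paper hides it in a $\lesssim$.
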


\begin{theorem} \label{CRSgMg}
	Let $1\leq p,q<\infty$ and $g\in H^\infty$. The following are equivalent:
	\begin{enumerate}
		\item 
		$M_g: RM(p,q)\rightarrow RM(p,q)$ has closed range.
		\item $S_g : RM(p,q)/\mathbb C \rightarrow RM(p,q)/\mathbb C$
		has closed range.
		\item There is an $\eta \in (0,1)$, a $\delta>0$  and a $c>0$ such that
		\begin{align*}
			m(G_c\cap \Delta(\alpha,\eta))\geq \delta\, m(\Delta(\alpha,\eta)),\quad \alpha\in\D,
		\end{align*}
		where $G_c=\{z\in\D\ :\ |g(z)|>c\}$.
		
	\end{enumerate}

\end{theorem}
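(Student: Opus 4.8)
The plan is to prove the two equivalences (1)$\iff$(3) and (2)$\iff$(3) separately, so that (1)$\iff$(2)$\iff$(3) follows. Throughout I identify $RM(p,q)$ with the analytic tent space $AT^q_p$, and I use that for the operators in question ``closed range'' and ``bounded below'' coincide: $M_g$ is injective for $g\not\equiv 0$ (the statement being vacuous otherwise), while for $S_g$ on the quotient this equivalence is exactly the result quoted from \cite{Anderson} in the introduction. Hence it suffices to characterise when $M_g$ and $S_g$ are bounded below, and the decisive tool in both cases is Theorem \ref{Main Theorem}, the reverse Carleson inequality for $AT^q_p$.

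For $M_g$ the crucial point is that $fg$ is again analytic, so Theorem \ref{Main Theorem} applies to it directly. To get (3)$\Rightarrow$(1) I would estimate, for $f\in AT^q_p$,
\[
\|M_g f\|^q_{T^q_p}\ \ge\ \int_{\T}\left(\int_{\Gamma(\xi)\cap G_c}|fg|^p\,\frac{dm}{1-|z|}\right)^{q/p}|d\xi|\ \ge\ c^{q}\int_{\T}\left(\int_{\Gamma(\xi)\cap G_c}|f|^p\,\frac{dm}{1-|z|}\right)^{q/p}|d\xi|,
\]
and then invoke the implication (b)$\Rightarrow$(a) of Theorem \ref{Main Theorem} to bound the right-hand side below by $c^{q}K^{q}\|f\|^q_{T^q_p}$. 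For the converse (1)$\Rightarrow$(3) I would split each inner integral over $\Gamma(\xi)\cap G_c$ and its complement; on the complement $|g|\le c$ and on $G_c$ one has $|g|\le\|g\|_{\infty}$, so the elementary bound $(a+b)^{q/p}\lesssim a^{q/p}+b^{q/p}$ gives
\[
\|M_g f\|^q_{T^q_p}\ \lesssim\ \|g\|_{\infty}^{q}\int_{\T}\left(\int_{\Gamma(\xi)\cap G_c}|f|^p\,\frac{dm}{1-|z|}\right)^{q/p}|d\xi|\ +\ c^{q}\,\|f\|^q_{T^q_p}.
\]
Since $M_g$ is bounded below, choosing $c$ small enough to absorb the last term into $\|M_g f\|^q_{T^q_p}$ yields precisely assertion (a) of Theorem \ref{Main Theorem} for the set $G_c$, whence (b)$=$(3).

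The operator $S_g$ is treated in the same spirit after passing to the Littlewood--Paley (derivative) description of the $RM(p,q)$ norm, the analogue of the Bergman and Hardy formulas recalled above, namely $\|f\|^q_{RM(p,q)/\mathbb{C}}\asymp\int_{\T}(\int_{\Gamma(\xi)}|f'|^p(1-|z|)^{p-1}\,dm)^{q/p}|d\xi|$. Since $(S_g f)'=g\,f'$ and $(S_gf)(0)=0$, boundedness below of $S_g$ on $RM(p,q)/\mathbb{C}$ becomes
\[
\int_{\T}\left(\int_{\Gamma(\xi)}|g f'|^p(1-|z|)^{p-1}dm\right)^{q/p}|d\xi|\ \gtrsim\ \int_{\T}\left(\int_{\Gamma(\xi)}|f'|^p(1-|z|)^{p-1}dm\right)^{q/p}|d\xi|,
\]
where, as $f$ runs over $RM(p,q)$, the analytic function $u=f'$ runs over \emph{all} analytic functions of finite weighted tent norm (given such a $u$, set $f(z)=\int_0^z u$). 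Writing $d\sigma(z)=(1-|z|)^{p-1}dm(z)$, this is exactly a reverse Carleson inequality for the measure $\sigma$ applied to the analytic $u$, and both implications against (3) would be obtained by repeating the $M_g$ argument verbatim with $\frac{dm}{1-|z|}$ replaced by $\sigma$ and with $G_c=\{|g|>c\}$.

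The \emph{main obstacle} is that Theorem \ref{Main Theorem} is stated for the measure $\frac{dm}{1-|z|}$, whereas the $S_g$ reduction produces the weighted measure $\sigma=(1-|z|)^{p-1}dm$. I expect to overcome this by establishing the weighted analogue of Theorem \ref{Main Theorem}: because $(1-|z|)^{p-1}\asymp(1-|\alpha|)^{p-1}$ on every pseudohyperbolic disc $\Delta(\alpha,\eta)$ by (\ref{psh compar}), one has $\sigma(E\cap\Delta(\alpha,\eta))\asymp(1-|\alpha|)^{p-1}m(E\cap\Delta(\alpha,\eta))$ for each measurable $E$, so the density condition (b) is literally the same whether measured with $m$ or with $\sigma$, and the covering and sub-mean-value estimates behind the proof of Theorem \ref{Main Theorem} carry over to $\sigma$ unchanged. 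Granting this weighted version, the displayed inequality is equivalent to (3), which closes the loop (2)$\iff$(3); combined with the $M_g$ analysis this gives (1)$\iff$(2)$\iff$(3).
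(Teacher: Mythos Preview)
Your proposal is correct and follows essentially the same route as the paper: the $M_g$ argument is identical to what the paper sketches, and for $S_g$ the paper establishes precisely the weighted analogue of Theorem~\ref{Main Theorem} that you anticipate, by re-running the auxiliary lemmas with $|f'(z)|(1-|z|)$ in place of $|f(z)|$ and recording the result as Theorem~\ref{2MainTheorem}. The only difference is cosmetic: the paper keeps the measure $\tfrac{dm}{1-|z|}$ and attaches the weight $(1-|z|)^p$ to $|f'|^p$, whereas you absorb it into $\sigma=(1-|z|)^{p-1}\,dm$; these are the same computation.
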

Notice that, the symbols $g\in H^{\infty}$ of Theorem~\ref{CRSgMg} are explicitly understood  by the work of Anderson. Nevertheless, as far as we know,
there is no example of Bloch functions that fulfil condition (\ref{conditionTg1}). Notice that constant functions are not among them.
Recall that, according to Anderson, the function $g\in BMOA$ that introduce a closed range $T_g$ on the Hardy spaces are exactly the constant functions. That being the case, we  provide a non trivial collection of $g\in \mathcal B$ that serve as symbols of a closed range $T_g$ in the $RM(p,q)$ spaces. Consequently, this collection of symbols is also valid for the case of Bergman spaces.

This work has the following structure. The first part of Section $2$ is about the definition of $RM(p,q)$, $AT_p^q$ spaces and their connection. We also present equivalent expressions in terms of the derivative. These expressions are known in the literature as Littlewood-Paley type inequalities. In the rest of the section we discuss the background 
related to the action of $T_g$, $S_g$ and $M_g$ on the $RM(p,q)$. 
In Section $3$, we introduce the auxiliary lemmas needed and we present the proofs of Theorem~\ref{Main Theorem}, Theorem ~\ref{CRTg} and Theorem~\ref{CRSgMg}. 
The last section is devoted to the presentation of examples of symbols $g$ related to Theorem~\ref{CRTg}. To be specific, we prove that a $g\in \mathcal B$ defined in terms of lacunary series introduces a closed range operator $T_g$ in the $RM(p,q)$ spaces. However, this is not true for univalent functions $g\in \mathcal B$.

\section{Definitions and first properties}
\addtocontents{toc}{\protect\setcounter{tocdepth}{1}}
In this section we recall the definition of the spaces of analytic functions under discussion. Moreover, we compile some properties for the sake of being self-contained.
\subsection{Average radial integrability spaces}
These are the $RM(p,q)$ spaces introduced in \cite{Aguilar-Contreras-Piazza}. Although we are interested  for their Banach space version, below we present their definition for the full range of the values of $p,q$.
\begin{definition}
	Let $0<p,q\leq +\infty$. We define the spaces of analytic functions
	
\begin{equation*}
RM(p,q)=\{f\in \mathcal{H}(\D)\ :\ \rho_{p,q}(f)<+\infty\}
\end{equation*}
	where
	\begin{equation*}
	\begin{split}
	\rho_ {p,q}(f)&=\left(\frac{1}{2\pi}\int_{0}^{2\pi} \left(\int_{0}^{1} |f(r e^{i \theta})|^p \ dr \right)^{q/p}d\theta \right)^{1/q}, \quad \text{ if } p,q<+\infty,\\
	\\
	\rho_ {p,\infty}(f)&=\esssup_{t\in[0,2\pi)}\left(\int_{0}^{1} |f(r e^{i\theta})|^p \ dr \right)^{1/p}, \quad \text{ if } p<+\infty, \\
	\\
	\rho_ {\infty,q}(f)&=\left(\frac{1}{2\pi}\int_{0}^{2\pi} \left(\sup_{r\in [0,1)} |f(r e^{i \theta})| \right)^{q}d\theta \right)^{1/q},\quad\text{ if } q<+\infty,\\
	\\
	\rho_{\infty,\infty}(f)&=\|f\|_{H^{\infty}}=\sup_{z\in\D}|f(z)|.
	\end{split}
	\end{equation*}
\end{definition}

Our interest is restricted to the range $p,q \in [1,\infty)$. These $RM(p,q)$ are Banach spaces under the norm 
$$
\|f\|_{RM(p,q)} = \rho_ {p,q}(f)\,.
$$
Notice that, when $p=q$ 
$$
RM(p,p) \equiv A^p\,.
$$
A change in the order of integration is enough to prove it. In any other case, an appropriate application of Holder's inequality results to the containments
$$
A^p \subset RM(p,q)\, \subset A ^q,\quad q<p
$$
or 
$$
A^q\subset RM(p,q) \subset A^p \,,\quad p<q\,.
$$
The inclusions are strict.

The case $q=\infty$ stand for the bounded $p$-integrability of $f$. It makes no difference if instead of the essential supremum we use the supremum.

It is worth of mentioning that in the case $p=\infty$ we have
$$
RM(\infty,q)\equiv H^q\,
$$
due to the fact that the membership of an $f$ in $H^q$ can be equivalently described by the $q$-integrability over $[0,2\pi)$ of the radial maximal function 
$$
Rf(e^{i\theta})=\sup_{r\in [0,1)} |f(r e^{i \theta})|\,
$$
of the function $f$ (see \cite{Du}). 

Closing, we recall from \cite{Aguilar-Contreras-Piazza} the pointwise growth estimates
\begin{equation*}
	|f(z)| \leq \,C_1\,\frac{\|f\|_{RM(p,q)}}{(1-|z|)^{1/p+1/q}} \,\,,\quad z\in \D
\end{equation*}
and
\begin{equation}\label{ged} 
	|f'(z)| \leq \,C_2 \,\frac{\|f\|_{RM(p,q)}}{(1-|z|)^{1/p+1/q+1}} \,\,,\quad z\in \D
\end{equation}
of the $RM(p,q),\,1\leq p,q <\infty\,$. The $C_1, C_2$ are positive constants
independent of $f$ and $z$\,.

\subsection{Tent spaces }
The work of Coifman, Meyer and Stein  is considered as the starting point of the study of tent spaces \cite{CMS}. 
Since then, they have been widely studied by many authors (see, e.g., \cite{Luecking1987}, \cite{Luecking},   \cite{Jevtic_1996}, \cite{Arsenovic}, \cite{Cohn_Verbitsky_200}, \cite{Perala_2018}, \cite{MiihkinenPauPeralaWang2020}).

Let a $\xi\in \T$. We define the cone like region
\begin{align*}
\Gamma_{1/2}(\xi) =\bigl\{ z\in \mathbb{D} : |z|< 1/2   \bigr\} \cup \bigcup_{|z|<1/2}[z,\zeta)\,. 
\end{align*}
\begin{definition}
	Let $0<p,q, <+\infty$. The tent spaces  $T_p^q$ consist of measurable functions $f$ on $\D$ such that 
	\begin{align}\label{TentA}
	\|f\|_{T_{p}^{q}}= \left\{\int_{\T} \left(\int_{\Gamma_{1/2}(\xi)} |f(z)|^p \ \frac{dA(z)}{(1-|z|^2)^{}} \right)^{q/p}\ |d\xi|\right \}^{1/q}<+\infty.
	\end{align}
 \end{definition}

It is of great importance that in \eqref{TentA} we can  use any of the cone-like regions
\begin{align*}
	\Gamma_{\beta}(\xi) =\left\{ z\in \mathbb{D} : |z|< \beta   \right\} \cup \bigcup_{|z|<\beta}[z,\zeta)\,,\quad \beta\in(0,1).
\end{align*}
In other words, we have 
\begin{align*}
	\|f\|_{T^q_p} \asymp \left\{\int_{\T} \left(\int_{\Gamma_{\beta}(\xi)} |f(z)|^p \ \frac{dA(z)}{(1-|z|^2)^{}} \right)^{q/p}\ |d\xi|\right \}^{1/q} \,.
\end{align*}
Actually, we can use any of the non-tangential regions 
 \begin{align*}
		 \Gamma_M (\xi) =\left\{z\in\D: |z-\xi|< M (1-|z|^2)  \right\},\quad  
          M>\frac{1}{2}\,\,
\end{align*}
as well.
This is true due the following technical lemma, well known to the experts of the area. 
The symbol $\Gamma_C(\xi)$ stands for any of the $\Gamma_{\beta} (\xi)$ or $\Gamma_M (\xi)$\,.
\begin{lemma}{\cite[Lemma 4, p. 66]{Arsenovic}}\label{estimate 1} 
	Let $0<p,q<+\infty$, $\lambda>\max\{1,p/q\}$ and $\mu$ be a positive Borel measure on $\D$\,. There are constants $K_i=K_i(p,q,\lambda,C),\,i=1,2 $ such that
	\begin{align*}
K_1 \int_{\T} \mu (\Gamma_C(\xi))^{q/p} |d\xi|\leq\int_{\T}\left(\int_{\D} \left(\frac{1-|z|}{|1-z\overline{\xi}|}\right)^{\lambda} \ d\mu(z)\right)^{q/p}|d\xi|\leq K_2 \int_{\T} \mu (\Gamma_C(\xi))^{q/p} |d\xi|\,.
	\end{align*}
\end{lemma}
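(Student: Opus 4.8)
Write $s=q/p$ and abbreviate the kernel by $k_{\lambda}(z,\xi)=\left(\frac{1-|z|}{|1-z\overline{\xi}|}\right)^{\lambda}$ and the tent function by $F(\xi)=\mu(\Gamma_C(\xi))$. The plan is to prove the two estimates separately. The lower bound (the constant $K_1$) needs no hypothesis on $\lambda$ and is the soft half: for $z\in\Gamma_C(\xi)$ the non-tangential geometry gives $1-|z|\le|1-z\overline{\xi}|\lesssim 1-|z|$, so $k_{\lambda}(z,\xi)\asymp 1$ on the cone. Restricting the integral over $\D$ to $\Gamma_C(\xi)$ therefore yields $\int_{\D}k_{\lambda}(z,\xi)\,d\mu(z)\gtrsim\mu(\Gamma_C(\xi))=F(\xi)$, and raising to the power $s$ and integrating in $\xi$ over $\T$ gives the first inequality.

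For the upper bound (the constant $K_2$) I would reduce everything to a single pointwise domination by a maximal function on $\T$. Writing $\zeta_z=z/|z|$ and letting $\theta(z,\xi)$ denote the arclength distance from $\xi$ to $\zeta_z$, one has $|1-z\overline{\xi}|\asymp(1-|z|)+\theta(z,\xi)$, so the kernel decays like $\theta^{-\lambda}$ away from the shadow $I(z)=\{\xi:z\in\Gamma_C(\xi)\}$, an arc of length $\asymp 1-|z|$ centred at $\zeta_z$. The key step is the sharp pointwise estimate
\[
\int_{\D}k_{\lambda}(z,\xi)\,d\mu(z)\ \lesssim\ \bigl(\mathcal{M}(F^{1/\lambda})(\xi)\bigr)^{\lambda},\qquad \xi\in\T,
\]
where $\mathcal{M}$ is the Hardy--Littlewood maximal operator on $\T$. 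This is obtained from a Whitney-type (dyadic corona) decomposition of $\D$ into the regions where $\theta(z,\xi)\asymp 2^{j}(1-|z|)$, on each of which $k_{\lambda}\asymp 2^{-j\lambda}$; the resulting geometric series in $j$ converges precisely because $\lambda>1$, and the exponent $1/\lambda$ inside the maximal operator is exactly what is needed to retain the full $\theta^{-\lambda}$ decay rather than the weaker $\theta^{-1}$ decay of $\mathcal{M}F$ itself. Granting this, with $\lambda s>1$ the Hardy--Littlewood maximal theorem applies to $F^{1/\lambda}$ on $L^{\lambda s}(\T)$ and gives
\[
\int_{\T}\Bigl(\int_{\D}k_{\lambda}(z,\xi)\,d\mu(z)\Bigr)^{s}\,|d\xi|\ \lesssim\ \int_{\T}\bigl(\mathcal{M}(F^{1/\lambda})\bigr)^{\lambda s}\,|d\xi|\ \lesssim\ \int_{\T}\bigl(F^{1/\lambda}\bigr)^{\lambda s}\,|d\xi|=\int_{\T}F(\xi)^{s}\,|d\xi|,
\]
which is the second inequality with $K_2$.

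The \emph{main obstacle} is the regime $q<p$, i.e.\ $s<1$: there the plain maximal function is unbounded on $L^{s}(\T)$, so one cannot simply bound $\int_{\D}k_{\lambda}(z,\xi)\,d\mu(z)$ by $\mathcal{M}F(\xi)$ and integrate. This is exactly why the hypothesis must be $\lambda>\max\{1,p/q\}$ and not merely $\lambda>1$: the extra room $\lambda>p/q=1/s$ forces $\lambda s>1$, which is what lets the maximal theorem run on $L^{\lambda s}(\T)$ after the fast-decaying kernel has been encoded through the exponent $1/\lambda$. That this threshold is sharp can be checked by testing against a point mass $\mu=\delta_{a}$ with $1-|a|\to 0$: the right-hand side is $\asymp 1-|a|$, while the middle integral is $\asymp\int_{0}^{\pi}\bigl(\tfrac{1-|a|}{(1-|a|)+\theta}\bigr)^{\lambda s}\,d\theta$, which is $\asymp 1-|a|$ when $\lambda s>1$ and of strictly larger order otherwise. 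Keeping track of the comparison constants, all of which depend only on $p,q,\lambda$ and the aperture of the cones, then yields both $K_1$ and $K_2$ and completes the proof.
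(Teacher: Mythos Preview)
The paper does not prove this lemma; it is quoted verbatim from Arsenovi\'c \cite{Arsenovic} and used as a black box. There is therefore no in-paper argument to compare your proposal against.

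On the substance of your sketch: the lower bound is correct and immediate, exactly as you say. For the upper bound, the overall strategy --- reduce to a pointwise domination by $(\mathcal{M}(F^{1/\lambda}))^{\lambda}$ and then invoke the Hardy--Littlewood maximal theorem on $L^{\lambda s}(\T)$ with $\lambda s>1$ --- is a sound and well-known route, and your heuristic discussion of why the exponent threshold $\lambda>\max\{1,p/q\}$ is sharp (the point-mass test) is accurate. However, the crucial pointwise inequality
\[
\int_{\D}k_{\lambda}(z,\xi)\,d\mu(z)\ \lesssim\ \bigl(\mathcal{M}(F^{1/\lambda})(\xi)\bigr)^{\lambda}
\]
is only asserted, not derived. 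Your dyadic-corona decomposition gives $\int_{\D}k_{\lambda}\,d\mu\asymp\sum_{j\ge 0}2^{-j\lambda}\mu(A_j)$ with $A_j\subset\Gamma_{c2^{j}}(\xi)$, but you do not explain how the right-hand side is controlled by $(\mathcal{M}(F^{1/\lambda})(\xi))^{\lambda}$; merely saying ``the geometric series converges because $\lambda>1$'' does not produce the maximal function of $F^{1/\lambda}$. A term-by-term bound of the form $\mu(\Gamma_{c2^{j}}(\xi))\lesssim 2^{j\alpha}(\mathcal{M}(F^{1/\lambda})(\xi))^{\lambda}$ with $\alpha<\lambda$ is what you would need, and that step requires its own argument (relating $\mu(\Gamma_{M}(\xi))$ to averages of $F$ over arcs of comparable scale). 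As written, this is a genuine gap: the key estimate is plausible and in fact true, but the justification you give is a heuristic rather than a proof.
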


 	 As in the case of the $RM(p,q)$,  the tent spaces are defined for the limit values of the $p,q$ as well. The $T_{\infty}^{q}$ consists of measurable functions $f$ on $\D$ with
	 	 \begin{align*}
	 	 	\|f\|_{T_{\infty}^{q}}=\left\{\int_{\T} (\esssup_{z\in \Gamma_{1/2}(\xi)} |f(z)|)^{q}\  |d\xi|\right\}^{1/q},\quad\text{ if } q<+\infty\,.
	 	 \end{align*}
 	  It is known that the definition is independent of the type of the non-tangential region we use.
	 
	 When $q=+\infty$ and $p<+\infty$, the tent space $T_{p}^{\infty}$ consists of measurable functions $f$ on $\D$ with
	 	 \begin{align}\label{TentC}
	 	 	\|f\|_{T_p^\infty}=\sup_{\xi\in \T} \left(\sup_{w\in\Gamma(\xi)} \frac{1}{(1-|w|^2)} \int_{S(w)} |f(z)|^{p}\, dA(z) \right)^{1/p}<+\infty,
	 	 \end{align}
	 	 where 
	 	 $$S(re^{i\theta})=\left\{\rho e^{it}\ : 1-\rho\leq 1-r,\ |t-\theta|\leq \frac{1-r}{2}\right\}$$ 
	 	 for $re^{i\theta}\in \D\setminus\{0\}$ and $S(0)=\D$.
	 	 An equivalent way to state (\ref{TentC}) is to say that the measure
	 	 $$d\mu(z)=|f(z)|^p\,dm(z)$$
	 	 is a Carleson measure. Recall that a positive Borel measure $\mu$ in $\mathbb{D}$
	 	 is called Carleson if and only if 
	 	 $$\sup_{I\subset \mathbb{T}} \frac{\mu(S(I))}{|I|}\,<\infty \,\, .$$
	 	  The supremum is taken over all arcs $I$ of the unit circle, $|I|$ is the arc length and $$S(I)=\left\{ z\in \mathbb{D} :\,\, 1-|I|\leq \,\,|z| <1\,,\,\, \frac{z}{|z|} \in I \right\}\,.$$ 


	Here, we are interested in the holomorphic version of the tent spaces. We denote that  as 
	$$AT_p^{q} = T_p^q\cap \mathcal{H}(\D).$$
	 The limit case $p=\infty$ corresponds to the Hardy spaces, that is, $AT_{\infty}^q\equiv H^q \,,$ 
	 since the membership of an $f\in H^q$ can be determined by the $q$-integrability over $[0,2\pi)$ of the non-tangential maximal function
	 $$
	 N_{C}f(e^{i\theta})=\esssup_{z\in \Gamma_{C}(e^{i\theta})} |f(z)|\,
	 $$
	of the function $f$ (see \cite{Du}).

\subsection{Integration Operators on the RM(p,q) spaces}
Let $g\in \mathcal H (\mathbb D)$. We recall from the introduction  the definition of  the integration operator $T_g$ as 
$$
T_g(f)(z)= \int_0^z f(\zeta)\,g'(\zeta)\,d\zeta\,\,,\quad\quad z\in\mathbb D\,.
$$
We consider its action on a Banach space $\mathbb X\subset \mathcal H (\mathbb D) $, that is, $f\in \mathbb X$.

Pommerenke was the first who considered $T_g$ on spaces of analytic functions by proving that it is bounded on $H^2$
if and only if $g\in BMOA$ \cite{Pom}. Aleman and Siskakis proved in \cite{AS} that the same is true for any $H^p\,,\,p\in[1,\infty)$. Later on, they confronted the question of boundedness on the Bergman spaces $A^p,\,p\in [1,\infty)$, by establishing as  necessary and sufficient condition the belonging of the symbol $g$ to the Bloch space $\mathcal B$ in \cite{AS2}.
Hardly speaking, the key for the proof of sufficiency in any of these spaces is the use of a norm expression based on the derivative of $f$. Necessity comes out using an appropriate family of test functions. More or less this is the way to deal with the study of $T_g$ and $S_g$
in spaces of analytic functions.
 
Following this research program in \cite{Aguilar-Contreras-Piazza_2} the authors, among other things, consider the problem of the boundedness of $T_g$ in the $RM(p,q)$ spaces. For this purpose, they establish that the norm associated to the spaces can be equivalently described by the derivative. This is what they call Littlewood-Paley type inequalities. Precisely, 
\begin{equation}\label{LP1}
	\rho_{p,q}(f) \simeq \,|f(0)|\, +\,\rho_{p,q}(f'(z) (1-|z|))\,\,,\quad 1\leq p,q <\infty\,.
\end{equation}
That being the case, 
they follow the path just described above to prove that 
  $$T_g : RM(p,q) \rightarrow RM(p,q) \,,\quad \quad p,q \in [1,\infty)\,\,$$ 
boundedly if and only if $g\in \mathcal B$. 

It is well known among the experts of the area that the  boundedness  of 
$$
S_g(f)(z) =  \int_0^z f(\zeta)\,g'(\zeta)\,d\zeta\,\,,\quad\quad z\in\mathbb D\,,
$$
on Hardy and Bergman spaces
is characterized by the condition $g\in H^{\infty}$.
This is the case for the $RM(p,q), 1\leq p,q < \infty,$ too. However, this operator is not considered in \cite{Aguilar-Contreras-Piazza_2}\,. Therefore we present the proof. 
\begin{proposition}\label{Sg}
	Let $1\leq p,q<\infty$ and $g\in \mathcal H(\mathbb D)$. The operator $S_g$ is bounded in the 
	$RM(p,q)$ spaces if and only if $g\in H^{\infty}$\,.
	\end{proposition}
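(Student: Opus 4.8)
The plan is to reduce everything to the Littlewood--Paley description (\ref{LP1}) and then treat the two implications separately. Since $(S_gf)(0)=0$ and $(S_gf)'(z)=f'(z)\,g(z)$, applying (\ref{LP1}) to $S_gf$ gives
\[
\|S_gf\|_{RM(p,q)}=\rho_{p,q}(S_gf)\simeq \rho_{p,q}\big(f'(z)\,g(z)\,(1-|z|)\big),
\]
so that the boundedness of $S_g$ is equivalent to the estimate $\rho_{p,q}\big(f'g(1-|z|)\big)\lesssim \|f\|_{RM(p,q)}$. The sufficiency is then immediate: if $g\in H^{\infty}$, then $|f'(z)g(z)(1-|z|)|\le \|g\|_{H^\infty}\,|f'(z)|(1-|z|)$ pointwise, and the monotonicity of $\rho_{p,q}$ together with a second application of (\ref{LP1}) yields $\|S_gf\|_{RM(p,q)}\lesssim \|g\|_{H^\infty}\,\rho_{p,q}\big(f'(z)(1-|z|)\big)\lesssim \|g\|_{H^\infty}\,\|f\|_{RM(p,q)}$.

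For the necessity I would argue by testing on a normalized family. For each $a\in\D$ set
\[
f_a(z)=\frac{(1-|a|^2)^{t}}{(1-\overline{a}z)^{\,t+1/p+1/q}},
\]
with $t>0$ fixed large enough, and I would first record the two properties that make this family useful: $\|f_a\|_{RM(p,q)}\asymp 1$ uniformly in $a$, and $|f_a'(z)|\asymp (1-|a|)^{-(1/p+1/q+1)}$ for every $z$ in a fixed pseudohyperbolic disc $\Delta(a,\eta)$ (the latter since $|1-\overline{a}z|\asymp 1-|a|$ on $\Delta(a,\eta)$, by (\ref{psh compar})). Assuming $S_g$ bounded, the reduction above will give $|g(a)|\lesssim \|S_g\|$ for all $a$, that is $g\in H^{\infty}$, once the lower bound $\rho_{p,q}\big(f_a'g(1-|z|)\big)\gtrsim |g(a)|$ has been established.

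To produce that lower bound I would pass to the equivalent tent-space norm and restrict the integrations to the geometry around $a$. Writing $h_a=f_a'\,g\,(1-|z|)$, the inner integral over $\Gamma(\xi)$ is bounded below by its integral over $\Delta(a,\eta)$ whenever $\xi$ lies in an arc $I_a\subset\T$ of length $\asymp 1-|a|$ centred at $a/|a|$; here I use that the aperture of the cone may be taken large (the tent norm being independent of it) so that $\Delta(a,\eta)\subset\Gamma(\xi)$ for such $\xi$. On $\Delta(a,\eta)$ one has $1-|z|\asymp 1-|a|$ and $|f_a'(z)|\asymp (1-|a|)^{-(1/p+1/q+1)}$, while the subharmonicity of $|g|^p$ combined with (\ref{psh area}) gives $\int_{\Delta(a,\eta)}|g|^p\,dm\gtrsim (1-|a|)^2\,|g(a)|^p$. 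Substituting these into $\int_{\Gamma(\xi)}|h_a|^p\,dm/(1-|z|)=\int_{\Gamma(\xi)}|f_a'|^p|g|^p(1-|z|)^{p-1}\,dm$, the inner integral becomes $\gtrsim (1-|a|)^{-p/q}\,|g(a)|^p$; raising to the power $q/p$ and integrating over $I_a$ (of measure $\asymp 1-|a|$) the powers of $1-|a|$ cancel and yield $\rho_{p,q}(h_a)^q\gtrsim |g(a)|^q$, as required.

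The main obstacle is exactly this last bookkeeping: one must verify that the negative power $(1-|a|)^{-p/q}$ coming from the growth of $f_a'$ is precisely compensated by the area factor $(1-|a|)^2$ from subharmonicity and by the length $\asymp 1-|a|$ of the arc $I_a$, so that the final bound is genuinely independent of $a$. The only substantial ingredients are the normalization $\|f_a\|_{RM(p,q)}\asymp 1$ and the subharmonic mean-value step; everything else is routine once the geometric inclusion $\Delta(a,\eta)\subset\Gamma(\xi)$ for $\xi\in I_a$ and the comparisons (\ref{psh compar}), (\ref{psh area}) are invoked.
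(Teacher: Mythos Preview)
Your sufficiency argument coincides with the paper's. For necessity, your argument is correct --- the powers of $1-|a|$ do cancel exactly as you describe --- but the paper takes a shorter route. Rather than passing to the tent norm and using a subharmonic mean-value inequality to recover $|g(a)|$, the authors simply invoke the pointwise growth estimate~(\ref{ged}) for derivatives of $RM(p,q)$ functions: applying it to $S_gf_a$ gives
\[
|f_a'(z)|\,|g(z)|=|(S_gf_a)'(z)|\lesssim \frac{\|S_gf_a\|_{RM(p,q)}}{(1-|z|)^{1/p+1/q+1}}\lesssim \frac{\|S_g\|}{(1-|z|)^{1/p+1/q+1}},
\]
and substituting $z=a$ (where $|f_a'(a)|\asymp (1-|a|)^{-(1/p+1/q+1)}$) immediately yields $|g(a)|\lesssim\|S_g\|$. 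This avoids the tent-space geometry, the arc $I_a$, and the subharmonicity step entirely. Your approach, on the other hand, is a norm-level argument that would transfer to settings where a sharp pointwise derivative estimate like~(\ref{ged}) is not readily available; it is closer in spirit to the reverse-Carleson machinery of Section~3. Both arguments rest on the same test family and the normalization $\|f_a\|_{RM(p,q)}\asymp 1$.
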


\begin{proof}
	We begin with the assumption that $g\in H^{\infty}$ and we recall the (\ref{LP1}). Then
\begin{align*}
\|S_g (f)\|_{RM(p,q)} & =\rho_{p,q} (S_g(f)) \asymp |S_g(f)(0)| + \rho_{p,q} (S_g(f)'(z) (1-|z|))\\
& = \rho_{p,q} (S_g(f)'(z) (1-|z|))\\
	& = \left(\frac{1}{2\pi}\int_{0}^{2\pi} \left(\int_{0}^{1} |f'(re^{i\theta})|^p\, |g(re^{i\theta})|^p \, (1- r)^p\ dr \right)^{q/p}\,d\theta \right)^{1/q}\\
	& \leq \|g\|_{H^{\infty}} \,\,\left(\frac{1}{2\pi}\int_{0}^{2\pi} \left(\int_{0}^{1} |f'(re^{i\theta})|^p\, \, (1- r)^p\ dr \right)^{q/p}\,d\theta \right)^{1/q}\,\\
	& \lesssim \|g\|_{H^{\infty}}\,\, \|f\|_{RM(p,q)}\,,
\end{align*}
for every $f\in RM(p,q)$.
So, the sufficiency is proved. 

For the necessity we consider the operator bounded. So, there exists
a constant $C>0$ such that 
$$ \|S_g(f)\|_{RM(p,q)} \leq C \|f\|_{RM(p,q)}\,\,, \quad f\in RM(p,q)\,. $$
Combining that with (\ref{ged}) we get 
$$
|S_g(f)'(z)|\, \leq \, C'\,\frac{\|f\|_{RM(p,q)}}{(1-|z|)^{1/p+1/q+1}}\,,\quad f\in RM(p,q)\,,
$$
where $C'$ is a positive constant.
Now, we employ the test functions $$f_{\alpha}(z)=\frac{(1-|\alpha|)^{\gamma-1/p-1/q}}{(1-\bar{\alpha}z)^{\gamma}}\,,\quad \gamma > 1/p+1/q\,, \,\,\alpha \in \mathbb D\,,\, z\in \mathbb D\,.$$ 
They have the property 
$\sup_{\alpha} \|f_{\alpha}\|_{RM(p,q)} < \infty \,$ (see \cite{Aguilar})\,.
As a consequence, for any $z \in \mathbb D$,
$$
|f_{\alpha}'(z)| \,|g(z)|\, \lesssim \frac{1}{(1-|z|)^{1/p+1/q+1}}
$$
for every $ \alpha \in \mathbb D$\,. Choosing $z=\alpha$ implies that $g\in H^{\infty}$\,.
\end{proof}

Remember that $M_g$, $T_g$ and $S_g$ are connected as in (\ref{conection})\, and that $H^{\infty}\subset \mathcal B$. Therefore, without to much effort, we can check that $M_g$ is bounded in the
$RM(p,q)$ if and only if $g\in H^{\infty}$\,.

As it is stated in the introduction, our concern is the characterization of $T_g$, $S_g$ and $M_g$ as bounded below  operators when acting boundedly on the $RM(p,q)$. 
We are able to accomplish this task 
by studying the  problem in the tent space version of the $RM(p,q)$. We claim that this  is possible since, as it is established in \cite{Aguilar-Galanopoulos}, for  $1\leq p,q < \infty$
\begin{align*}
	\rho_{p,q} (f) &\asymp \|f\|_{T_p^q}\,\,,
\end{align*}
\begin{align*}	
	 \rho_{p,q}(f'(z) (1-|z|)) & \asymp \|f'(z) (1-|z|)\|_{T_p^q}
\end{align*}
and due to the Littlewood-Paley type inequalities 
\begin{equation*}
	\|f \|_{T^q_p} \simeq \|f'(z) (1-|z|)\|_{T^q_p }\,\,,\quad 1\leq p,q <\infty\,
\end{equation*}
for every $f\in AT^q_p$\, (see \cite[Theorem 2, p. 9]{Perala_2018}).

\section{Reverse Carleson Type Measures for the $AT^q_p$\,, $1\leq p,q<\infty$}

In this section we prove our main result, Theorem~\ref{Main Theorem}, and its consequences. In the statement presented in the introduction we only make use of pseudohyperbolic discs. However, due to the technicalities we have to deal with, we need the euclidean discs 
\begin{align*}
	\Delta_{\eta}(\alpha)=\{\,z\in \mathbb D : |z - \alpha | < \eta (1-|\alpha|)\,\}\,,\quad \eta \in (0,1)\,,\,\alpha \in \mathbb D\,,
\end{align*} 
as well. Notice the 
following remark.
\begin{remark}\label{RemarkDisks}
		It is known (see \cite[p. 4]{Luecking_1981}) that if $\eta \in (0,1)$, $\alpha\in \D$ and $\frac{2\eta}{1+\eta^2}\leq r<1$ then
		\begin{align*}
			\Delta_{\eta}(\alpha)\subseteq \Delta(\alpha,r).
		\end{align*}
	In addition to that, if $\eta \in (0,1)$ then there is a $\beta \in (1/2,1)$
	such that $$\cup_{\alpha \in \Gamma_{1/2}(\xi)} \Delta_{\eta}(\alpha) \subseteq \Gamma_{\beta} (\xi)$$
	where $\xi\in \mathbb T$ is the same for both regions. This is the region $\Gamma_{\beta}(\xi)$ considered
	in the following two Lemmas\,.
	
	\end{remark}

  
 \begin{lemma}\label{L1}
Let $1\leq p<\infty $, $\eta \in (0,1)$, $\varepsilon>0$, $f\in \mathcal{H}(\D)$, and
\begin{align*}
\mathcal{A}= \left\{\alpha\in\D\ : |f(\alpha)|^{p}<\frac{\varepsilon}{m(\Delta_\eta(\alpha))} \int_{\Delta_\eta(\alpha)} |f(z)|^{p}\ dm(z)\right\}.
\end{align*}
Then, there is a constant $C_1=C_1(\eta)>0$ such that
\begin{align*}
\int_{\mathcal{A}\cap \Gamma_{1/2}(\xi)} |f(z)|^{p}\ \frac{dm(z)}{1-|z|}\leq \, \varepsilon\, C_1 \int_{ \Gamma_{\beta}(\xi)} |f(z)|^{p}\ \frac{dm(z)}{1-|z|},
\end{align*}
for a non tangential region $\Gamma_{\beta}(\xi)$ with vertex at the same point $\xi \in \mathbb T$
as $\Gamma_{1/2}(\xi)$\,.
\end{lemma}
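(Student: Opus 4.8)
The plan is to feed the defining inequality of $\mathcal{A}$ into the left-hand integral and then exchange the order of integration via Tonelli's theorem. Concretely, for every $\alpha\in\mathcal{A}$ the definition gives the pointwise bound $|f(\alpha)|^{p}<\frac{\varepsilon}{m(\Delta_\eta(\alpha))}\int_{\Delta_\eta(\alpha)}|f(z)|^{p}\,dm(z)$, and since the Euclidean disc satisfies $m(\Delta_\eta(\alpha))=\eta^{2}(1-|\alpha|)^{2}\asymp(1-|\alpha|)^{2}$, this reads $|f(\alpha)|^{p}\lesssim \frac{\varepsilon}{(1-|\alpha|)^{2}}\int_{\Delta_\eta(\alpha)}|f(z)|^{p}\,dm(z)$. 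Inserting this into $\int_{\mathcal{A}\cap\Gamma_{1/2}(\xi)}|f(\alpha)|^{p}\frac{dm(\alpha)}{1-|\alpha|}$ produces a double integral over the pairs $(\alpha,z)$ with $\alpha\in\mathcal{A}\cap\Gamma_{1/2}(\xi)$ and $z\in\Delta_\eta(\alpha)$, carrying the weight $(1-|\alpha|)^{-3}$; to this I would apply Tonelli and integrate in $\alpha$ first.

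The crux is a geometric estimate on the ``reverse'' slice. For a fixed $z$ I would describe the set $E_z=\{\alpha\in\mathcal{A}\cap\Gamma_{1/2}(\xi): z\in\Delta_\eta(\alpha)\}$. The condition $z\in\Delta_\eta(\alpha)$, namely $|z-\alpha|<\eta(1-|\alpha|)$, forces by the triangle inequality the two-sided comparison $\frac{1-|z|}{1+\eta}<1-|\alpha|<\frac{1-|z|}{1-\eta}$, hence $(1-|\alpha|)\asymp(1-|z|)$ with constants depending only on $\eta$ (this is also \eqref{psh compar} read through Remark~\ref{RemarkDisks}, since $\Delta_\eta(\alpha)\subseteq\Delta(\alpha,r)$). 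The same condition then yields $|z-\alpha|<\frac{\eta}{1-\eta}(1-|z|)$, so $E_z$ lies inside a Euclidean disc about $z$ of radius $\asymp(1-|z|)$, whence $m(E_z)\lesssim(1-|z|)^{2}$. Consequently the inner integral obeys $\int_{E_z}\frac{dm(\alpha)}{(1-|\alpha|)^{3}}\asymp\frac{m(E_z)}{(1-|z|)^{3}}\lesssim\frac{1}{1-|z|}$.

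Finally, I would observe that $E_z$ is nonempty only when $z\in\bigcup_{\alpha\in\Gamma_{1/2}(\xi)}\Delta_\eta(\alpha)\subseteq\Gamma_\beta(\xi)$ by Remark~\ref{RemarkDisks}, which confines the outer $z$-integration to $\Gamma_\beta(\xi)$. Collecting the pieces gives
$$\int_{\mathcal{A}\cap\Gamma_{1/2}(\xi)}|f(\alpha)|^{p}\frac{dm(\alpha)}{1-|\alpha|}\lesssim\varepsilon\int_{\D}|f(z)|^{p}\,\frac{\mathbf 1_{\Gamma_\beta(\xi)}(z)}{1-|z|}\,dm(z)=\varepsilon\int_{\Gamma_\beta(\xi)}|f(z)|^{p}\frac{dm(z)}{1-|z|},$$
with an implied constant $C_1$ depending only on $\eta$ through the various $\eta$-dependent comparisons, as required. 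The genuinely delicate point is the Tonelli exchange together with the reverse-slice bound $m(E_z)\lesssim(1-|z|)^{2}$; once that and the comparison $(1-|\alpha|)\asymp(1-|z|)$ are in hand, the rest is routine bookkeeping of constants. Analyticity of $f$ plays no role here beyond guaranteeing well-defined pointwise values, so the argument is purely measure-theoretic.
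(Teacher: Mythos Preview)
Your proposal is correct and follows essentially the same approach as the paper: both insert the defining inequality of $\mathcal A$ into the integral, apply Fubini/Tonelli, use $(1-|\alpha|)\asymp(1-|z|)$ together with $m(E_z)\lesssim(1-|z|)^2$ to bound the inner $\alpha$-integral by $\frac{1}{1-|z|}$, and invoke Remark~\ref{RemarkDisks} to confine $z$ to $\Gamma_\beta(\xi)$. The only cosmetic difference is that the paper routes the reverse-slice estimate through the symmetry of pseudohyperbolic discs ($\chi_{\Delta_\eta(\alpha)}(z)\le\chi_{\Delta(\alpha,r)}(z)=\chi_{\Delta(z,r)}(\alpha)$), whereas you obtain it directly from the Euclidean inequality $|z-\alpha|<\eta(1-|\alpha|)$.
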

\begin{proof}
If $\alpha\in \mathcal{A}$ then
\begin{align*}
|f(\alpha)|^{p}<\frac{\varepsilon}{m(\Delta_{\eta}(\alpha))} \int_{\Delta_\eta(\alpha)} |f(z)|^{p}\ dm(z)\,\,.
\end{align*}
Hence, integrating both sides over $\mathcal{A}\cap \Gamma_{1/2}(\xi)$ and considering a
$\Gamma_{\beta} (\xi)$ as in Remark~\ref{RemarkDisks}
\begin{align*}
	\int_{\mathcal{A}\cap \Gamma_{1/2}(\xi)} |f(\alpha)|^{p}\ \frac{dm(\alpha)}{1-|\alpha|}< &\varepsilon 	\int_{\mathcal{A}\cap \Gamma_{1/2}(\xi)}\left(\frac{1}{m(\Delta_{\eta}(\alpha))}\int_{\Delta_{\eta}(\alpha)} |f(z)|^{p}\ dm(z)\right)\ \frac{dm(\alpha)}{1-|\alpha|}\\
\leq  & \varepsilon 	\int_{\mathcal{A}\cap \Gamma_{1/2}(\xi)}\left(\frac{1}{m(\Delta_{\eta}(\alpha))}\int_{\Gamma_{\beta}(\xi)} |f(z)|^{p}\, \chi_{\Delta_{\eta}(\alpha)}(z)\ dm(z)\right)\ \frac{dm(\alpha)}{1-|\alpha|}\,.
\end{align*}
\\
Remark~\ref{RemarkDisks} also implies that $\chi_{\Delta_\eta(\alpha)}(z)\leq \chi_{\Delta(\alpha,r)}(z)$  when $\frac{2\eta}{1+\eta^2}\leq r<1$.
Therefore, applying Fubini's theorem 
\begin{align*}
	\int_{\mathcal{A}\cap \Gamma_{1/2}(\xi)} |f(\alpha)|^{p}\ \frac{dm(\alpha)}{1-|\alpha|}<\varepsilon 	\int_{\Gamma_{\beta}(\xi)} |f(z)|^{p} \int_{\mathcal{A}\cap \Gamma_{1/2}(\xi)}\left(\frac{\chi_{\Delta(z,r)}(\alpha)}{m(\Delta_\eta(\alpha))} \right)\ \frac{dm(\alpha)}{1-|\alpha|}\ dm(z)\,.
\end{align*}
Remember that
 $$1-|\alpha|\asymp 1-|z|\,\,, \quad \alpha \in\Delta(z,r)$$ 
and  
$$ m(\Delta_{\eta}(\alpha))\asymp (1-|\alpha|)^2 \asymp m(\Delta(z,r))\,\,, \quad \alpha \in\Delta(z,r)\,, $$ 
where the constants involved depend on $\eta$\,. Thus,  we obtain that
\\
\begin{align*}
	\int_{\mathcal{A}\cap \Gamma_{1/2}(\xi)} |f(\alpha)|^{p}\ \frac{dm(\alpha)}{1-|\alpha|}
	& \leq \, C_1 \,\varepsilon \,\int_{\Gamma_{\beta}(\xi)} |f(z)|^{p} \ \frac{dm(z)}{1-|z|}\,.
\end{align*}
\\
\end{proof}

Let $1\leq p<\infty$ and $\lambda\in (0,1)$. We define the set
\begin{align*}
	E_\lambda(\alpha)= \left\{z\in\Delta_\eta(\alpha)\ :\ |f(z)|^p\geq \lambda |f(\alpha)|^p\right\}
\end{align*} 
and the function
\begin{align*}
	B_\lambda f(\alpha) = \frac{1}{m(E_\lambda(\alpha))}\int_{E_\lambda(\alpha)} |f(z)|^p\ dm(z)\,, \quad \quad \alpha \in \mathbb{D}\,.
\end{align*}

The following is the analogue of Lemma~\ref{L1} for $B_\lambda f$\,.
\begin{lemma}\label{L2}
	If $p\geq 1$, $\varepsilon\in (0,1)$, $f\in \mathcal{H}(\D)$, $\lambda\in (0,\frac{1}{2^p})$, and
	\begin{align*}
	B= \left\{\alpha\in\D\ : |f(\alpha)|^{p}<\varepsilon^{2+\frac{2}{p}}B_{\lambda}(f)(\alpha)\right\}.
	\end{align*}
	Then, there is a constant $C_2=C_2(\eta)>0$ such that
	\begin{align*}
	\int_{{B}\cap \Gamma_{1/2}(\xi)} |f(z)|^{p}\ \frac{dm(z)}{1-|z|}\leq \,\varepsilon\, C_2 \int_{\Gamma_{\beta}(\xi)} |f(z)|^{p}\ \frac{dm(z)}{1-|z|}\,\,,
	\end{align*}
for a non tangential region $\Gamma_{\beta}(\xi)$ with vertex at the same $\xi \in \mathbb T$ as $\Gamma_{1/2}(\xi)$\,.
\end{lemma}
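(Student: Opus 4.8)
The plan is to deduce Lemma~\ref{L2} directly from Lemma~\ref{L1}, by showing that the bad set $B$ is contained in a set of exactly the type $\mathcal{A}$ treated there, only formed with a slightly larger Euclidean disc. First I would discard the fine structure of $E_\lambda(\alpha)$: since $E_\lambda(\alpha)\subseteq\Delta_\eta(\alpha)$, the quantity $B_\lambda f(\alpha)$ is an average of $|f|^p$ over a subset of $\Delta_\eta(\alpha)$, and therefore $B_\lambda f(\alpha)\le \sup_{z\in\Delta_\eta(\alpha)}|f(z)|^p$. Consequently, for every $\alpha\in B$ one obtains the crude pointwise bound $|f(\alpha)|^p<\varepsilon^{2+\frac{2}{p}}\sup_{z\in\Delta_\eta(\alpha)}|f(z)|^p$. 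This is the only place the definition of $B$ is used; in particular the precise threshold $\lambda$ is irrelevant for the present lemma (its role is reserved for the proof of Theorem~\ref{Main Theorem}), all that matters is the inclusion $E_\lambda(\alpha)\subseteq\Delta_\eta(\alpha)$.

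The second ingredient is a sub-mean value estimate that converts this supremum into an average over a marginally larger disc. I would fix $\eta'\in(\eta,1)$, say $\eta'=\frac{1+\eta}{2}$. Since $|f|^p$ is subharmonic for $p\ge 1$, for any $w\in\Delta_\eta(\alpha)$ the disc $D\bigl(w,(\eta'-\eta)(1-|\alpha|)\bigr)$ is contained in $\Delta_{\eta'}(\alpha)$, so the area sub-mean value inequality, combined with the elementary comparabilities $1-|w|\asymp 1-|\alpha|$ and $m(\Delta_{\eta'}(\alpha))\asymp(1-|\alpha|)^2$ (cf. (\ref{psh compar}), (\ref{psh area})), produces a constant $C_3=C_3(\eta,\eta')$ with $\sup_{z\in\Delta_\eta(\alpha)}|f(z)|^p\le \frac{C_3}{m(\Delta_{\eta'}(\alpha))}\int_{\Delta_{\eta'}(\alpha)}|f(z)|^p\,dm(z)$. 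Combining this with the previous step, every $\alpha\in B$ satisfies $|f(\alpha)|^p< C_3\,\varepsilon^{2+\frac{2}{p}}\,\frac{1}{m(\Delta_{\eta'}(\alpha))}\int_{\Delta_{\eta'}(\alpha)}|f(z)|^p\,dm(z)$. In other words, $B$ is contained in the set $\mathcal{A}$ of Lemma~\ref{L1} built from the radius $\eta'$ and the parameter $C_3\varepsilon^{2+\frac{2}{p}}$.

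It then suffices to apply Lemma~\ref{L1} with this $\eta'$ and this parameter. Since the integrand $|f(z)|^p/(1-|z|)$ is nonnegative and $B\cap\Gamma_{1/2}(\xi)\subseteq \mathcal{A}\cap\Gamma_{1/2}(\xi)$, Lemma~\ref{L1} yields $\int_{B\cap\Gamma_{1/2}(\xi)}|f(z)|^p\,\frac{dm(z)}{1-|z|}\le C_3\,\varepsilon^{2+\frac{2}{p}}\,C_1(\eta')\int_{\Gamma_\beta(\xi)}|f(z)|^p\,\frac{dm(z)}{1-|z|}$, where $\Gamma_\beta(\xi)$ is the region associated to $\eta'$ via Remark~\ref{RemarkDisks}. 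Finally, because $\varepsilon\in(0,1)$ and $2+\frac{2}{p}>1$ we have $\varepsilon^{2+\frac{2}{p}}\le\varepsilon$; setting $C_2=C_3\,C_1(\eta')$, which depends only on $\eta$ once $\eta'=\frac{1+\eta}{2}$ is chosen, gives exactly the asserted inequality.

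I expect the only genuinely delicate point to be the sup-to-average estimate of the second paragraph: one must check that the enlargement from $\eta$ to $\eta'$ keeps all the auxiliary discs compactly inside $\D$ and that the implied constants depend only on $\eta,\eta'$, never on $\alpha$ or $f$. Everything else is bookkeeping, the crucial simplification being that $B_\lambda f$ may be dominated crudely by a supremum, which is precisely what allows Lemma~\ref{L1} to be reused almost verbatim.
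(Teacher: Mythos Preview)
Your proof is correct and takes a genuinely different route from the paper. The paper splits $B\cap\Gamma_{1/2}(\xi)$ into $B\cap\Gamma_{1/2}(\xi)\cap\mathcal{A}$ and $(B\cap\Gamma_{1/2}(\xi))\setminus\mathcal{A}$; the first piece is handled by Lemma~\ref{L1}, while for the second it establishes via a Cauchy integral formula argument that for $\alpha\notin\mathcal{A}$ the set $E_\lambda(\alpha)$ contains a disc of radius comparable to $\varepsilon^{1/p}(1-|\alpha|)$ (this is where the hypothesis $\lambda<1/2^p$ enters), yielding $m(E_\lambda(\alpha))\gtrsim\varepsilon^{2/p}m(\Delta_\eta(\alpha))$ and hence the desired bound after combining with the definition of $\mathcal{A}^c$. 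Your argument sidesteps all of this by the crude estimate $B_\lambda f(\alpha)\le\sup_{\Delta_\eta(\alpha)}|f|^p$ followed by subharmonicity over the enlarged disc $\Delta_{\eta'}(\alpha)$, which reduces the lemma wholesale to a single application of Lemma~\ref{L1}. Your approach is shorter and, as you observe, works for any $\lambda\in(0,1)$, so the restriction $\lambda<1/2^p$ is not needed for Lemma~\ref{L2} itself. The paper's more detailed analysis of $E_\lambda(\alpha)$ is not wasted, however: the lower bound on $m(E_\lambda(\alpha))$ and the accompanying Cauchy estimate feed into Remark~\ref{R2} and the proof of Theorem~\ref{MainTheorem}, so those ingredients would still have to appear elsewhere.
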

\begin{proof}
Applying Lemma~\ref{L1}
\\
\begin{align*}
\int_{{B}\cap \Gamma_{1/2}(\xi)} |f(z)|^{p}\ \frac{dm(z)}{1-|z|}&=\int_{{B}\cap \Gamma_{1/2}(\xi)\cap \mathcal{A}} |f(z)|^{p}\ \frac{dm(z)}{1-|z|}+\int_{({B}\cap \Gamma_{1/2}(\xi))\setminus\mathcal{A}} |f(z)|^{p}\ \frac{dm(z)}{1-|z|}\\
\\
&\leq C_1(\eta) \,\varepsilon \int_{\Gamma_{\beta}(\xi)} |f(z)|^{p} \ \frac{dm(z)}{1-|z|}+\int_{({B}\cap \Gamma_{1/2}(\xi))\setminus\mathcal{A}} |f(z)|^{p}\ \frac{dm(z)}{1-|z|}.
\end{align*}

We are looking for a similar $\varepsilon$-estimate of the second additive. If $\alpha\in{B}$ then
\begin{align*}
|f(\alpha)|^p\,<\, \frac{\varepsilon^{2+\frac{2}{p}}}{m(E_\lambda(\alpha))}\int_{E_\lambda(\alpha)} |f(z)|^p\ dm(z)\,.
\end{align*}
So, integrating both sides over $({B}\cap \Gamma_{1/2}(\xi))\setminus\mathcal{A}$  we get
\begin{align*}
\int_{({B}\cap \Gamma_{1/2}(\xi))\setminus\mathcal{A}} |f(\alpha)|^{p}\ \frac{dm(\alpha)}{1-|\alpha|}<\varepsilon^{2+\frac{2}{p}} \int_{({B}\cap \Gamma_{1/2}(\xi))\setminus\mathcal{A}}  \frac{1}{m(E_\lambda(\alpha))}\int_{E_\lambda(\alpha)} |f(z)|^p\ dm(z)\ \frac{dm(\alpha)}{1-|\alpha|}\,\,.
\end{align*}
\\
Assume that 
\begin{equation}\label{inclusion}
	\left\{z\in\D\ :\ |z-\alpha|<\frac{\varepsilon^{1/p} \eta (1-|\alpha|)}{2 C}\right\}\subset E_\lambda(\alpha)\,\,,\quad  \forall \alpha \notin \mathcal A\,,
\end{equation}	
where $C$ is a positive constant to be determined later,
then 
\begin{align*}
	m(E_\lambda(\alpha))\geq \,\,\frac{\varepsilon^{2/p} }{4 C^2}\,\, m(\Delta_\eta(\alpha))
	\,\,,\quad  \forall \alpha \notin \mathcal A\,.
\end{align*}
Thus,
\begin{align*}
	\int_{({B}\cap \Gamma_{1/2}(\xi))\setminus\mathcal{A}}& |f(\alpha)|^{p}\ \frac{dm(\alpha)}{1-|\alpha|}\,\\
	& \leq\, 4 C^2\,\varepsilon^2 \int_{({B}\cap \Gamma_{1/2}(\xi))\setminus\mathcal{A}}  \frac{1}{m(\Delta_{\eta}(\alpha))}\int_{E_\lambda(\alpha)} |f(z)|^p\ dm(z)\ \frac{dm(\alpha)}{1-|\alpha|}\\
	&\leq \, 4 C^2\,\varepsilon^2 \int_{({B}\cap \Gamma_{1/2}(\xi))\setminus\mathcal{A}}  \frac{1}{m(\Delta_{\eta}(\alpha))}\int_{\Delta_{\eta}(\alpha)} |f(z)|^p\ dm(z)\ \frac{dm(\alpha)}{1-|\alpha|}\\
	& \leq \,  4 C^2 \, \epsilon\, \int_{({B}\cap \Gamma_{1/2}(\xi))\setminus\mathcal{A}} |f(\alpha)|^{p}\ \frac{dm(\alpha)}{1-|\alpha|}\\
	&\,\leq \,  4 C^2 \, \varepsilon \,\int_{\Gamma_{\beta}(\xi)} |f(\alpha)|^{p}\ \frac{dm(\alpha)}{1-|\alpha|}
\end{align*}
and the lemma is proved.

To complete the picture we proceed to the proof of inclusion (\ref{inclusion})\,. Let $\alpha \in \mathbb{D} $\, and $z\in \Delta_{\frac{\eta}{4}}(\alpha)$\,.
By Cauchy integral formula, we have
\begin{align}\label{CauchyIF1}
|f(z)-f(\alpha)|&=\frac{1}{2\pi}\left|\int_{|w-\alpha|=\frac{\eta}{2} (1-|\alpha|)} f(w) \left(\frac{1}{w-z}-\frac{1}{w-\alpha}\right)\ dw\right|\nonumber\\
& \leq \frac{1}{2\pi} \int_{|w-\alpha|=\frac{\eta}{2}(1-|\alpha|)} |f(w)| \left|\frac{z-\alpha}{(w-z)(w-\alpha)}\right|\ |dw| \,.
\end{align}
By subhamonicity, for $w\in \overline{\Delta_{\frac{\eta}{2}}(\alpha)}$, we have
\begin{align*}
|f(w)|\leq \frac{C'}{m(\Delta_\eta(\alpha))} \int_{\Delta_{\eta}(\alpha)} |f(u)| \ dm(u)
\end{align*}
where $C'$ is an absolute positive constant.
 Applying that in (\ref{CauchyIF1})
\begin{align*}
	|f(z)-f(\alpha)|&\leq \frac{4 C'\,|z-\alpha|}{ \eta (1-|\alpha|)}\,\frac{1}{m(\Delta_\eta(\alpha))}\int_{\Delta_\eta(\alpha)} |f(u)|\ dm(u)\,.
\end{align*}
Now, if we consider $z\in \Delta_{\frac{\varepsilon^{\frac{1}{p}} \eta}{2C}}(\alpha)$  for a $C$ large enough then
\begin{align*}
	|f(z)-f(\alpha)|&\leq \frac{\varepsilon^\frac{1}{p}}{2 m(\Delta_\eta(\alpha))}\int_{\Delta_\eta(\alpha)} |f(u)|\ dm(u).
\end{align*}
Therefore,  if $\alpha\notin\mathcal{A}$ then it follows that
\begin{align*}
|f(z)|\geq |f(\alpha)|-|f(\alpha)-f(z)|\geq \frac{1}{2} |f(\alpha)|\,.
\end{align*}
So that,
\begin{align*}
	|f(z)|^{p}\geq \frac{|f(\alpha)|^p}{2^p}>\lambda |f(\alpha)|^p
\end{align*}
and this implies the strength of inclusion (\ref{inclusion})\,.
\\
\end{proof}

\begin{remark}\label{R2}
	Notice that for $f\in \mathcal{H}(\D)$, $\alpha\in \D$ and $\lambda\in (0,1)$ we have that
\begin{align*}
\frac{m(E_\lambda(\alpha))}{m(\Delta_\eta(\alpha))}\geq \frac{\log(1/\lambda)}{\log\left(\frac{B_{\lambda}(f)(\alpha)}{|f(\alpha)|^{p}}\right)+\log(1/\lambda)}.
\end{align*}
Now if $\alpha\in \D\setminus B$,\, $\varepsilon\in (0,1)$,\, $\lambda\in (0,1/2^p)$ it follows that $$\frac{B_{\lambda}(f)(\alpha)}{|f(\alpha)|^{p}}\leq \frac{1}{\varepsilon^{2+\frac{2}{p}}}\,\,.$$ 
If $\delta \in (0,1)$ then choosing $\lambda<\varepsilon^{\frac{2}{\delta}(2+\frac{2}{p})}$
we obtain
\begin{align*}
	\frac{m(E_\lambda(\alpha))}{m(\Delta_\eta(\alpha))}> \frac{(2/\delta)
\log(1/\varepsilon^{2+\frac{2}{p}})}{\log\left(1/\varepsilon^{2+\frac{2}{p}}\right)+(2/\delta)\log\left(1/\varepsilon^{2+\frac{2}{p}}\right)}>1-\frac{\delta}{2},
\end{align*}
that is,
\begin{align*}
m(E_\lambda(\alpha))>\left(1-\frac{\delta}{2}\right)m(\Delta_\eta(\alpha)).
\end{align*}
\end{remark}

Now, we state the last auxiliary Lemma. In order to prove it we use an argument based on  
the family of functions $\{f_{\alpha}\}_{\alpha \in \mathbb D} \subset AT^q_p\,,\, p,q \in [1,\infty),$ 
of Proposition~\ref{Sg}\,. Recall that 
$$ \|f_{\alpha}\|_{T_{p}^{q}} \,\asymp\, 1 \,, \quad \forall \alpha \in \mathbb D\,.$$

\begin{lemma}\label{Basic Lemma}
	Let $0<p,q<\infty$. Given $\varepsilon>0$, there exist $\eta\in (0,1)$ such that for all $\alpha\in\D$ there is a function $f_{\alpha}\in AT_p^q$ such that
	\begin{enumerate}
		\item $\|f_\alpha\|_{T_p^q}\asymp 1$, and
		\item $\|f_\alpha \chi_{\Delta^{c}(\alpha,\eta)}\|_{T_p^q}\leq \varepsilon$.
	\end{enumerate}
 \end{lemma}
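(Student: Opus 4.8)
The plan is to use the explicit family of test functions from Proposition~\ref{Sg}, namely
\[
f_\alpha(z)=\frac{(1-|\alpha|)^{\gamma-1/p-1/q}}{(1-\bar\alpha z)^{\gamma}},\qquad \gamma>1/p+1/q,
\]
for a fixed exponent $\gamma$ (taken large, depending only on $p,q$) and the same $\eta$ for every $\alpha$. Property (1), the normalization $\|f_\alpha\|_{T_p^q}\asymp 1$, is exactly the fact recorded just before the statement, so the entire task reduces to (2): showing that the tent-space mass of $f_\alpha$ carried \emph{outside} the pseudohyperbolic disc becomes uniformly small as $\eta\to1$.

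The geometric input is the standard identity $1-|\phi_\alpha(z)|^2=\frac{(1-|\alpha|^2)(1-|z|^2)}{|1-\bar\alpha z|^2}$. On the set $\Delta^{c}(\alpha,\eta)=\{|\phi_\alpha(z)|\ge\eta\}$ this forces $|1-\bar\alpha z|^2\ge (1-\eta^2)^{-1}(1-|\alpha|^2)(1-|z|^2)$, whence for every $t>0$
\[
|1-\bar\alpha z|^{-t}\le (1-\eta^2)^{t/2}\,\bigl[(1-|\alpha|^2)(1-|z|^2)\bigr]^{-t/2},\qquad z\in\Delta^{c}(\alpha,\eta).
\]
I would split the exponent $p\gamma=2\sigma+(p\gamma-2\sigma)$ and apply this bound only to the factor $|1-\bar\alpha z|^{-2\sigma}$, with $\sigma\in(0,1)$ to be fixed. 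Using $1-|\alpha|^2\asymp 1-|\alpha|$, this yields on $\Delta^{c}(\alpha,\eta)$ the pointwise estimate
\[
|f_\alpha(z)|^{p}\,\chi_{\Delta^{c}(\alpha,\eta)}(z)\ \lesssim\ (1-\eta^2)^{\sigma}\,\frac{(1-|\alpha|)^{p\gamma-1-p/q-\sigma}}{(1-|z|)^{\sigma}\,|1-\bar\alpha z|^{p\gamma-2\sigma}},
\]
in which $(1-\eta^2)^{\sigma}$ is the source of smallness.

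Next I would feed this into Lemma~\ref{estimate 1}, applied to $d\mu(z)=|f_\alpha(z)|^{p}\chi_{\Delta^{c}(\alpha,\eta)}(z)\,\frac{dm(z)}{1-|z|}$ and to an exponent $\lambda>\max\{1,p/q\}$. Since the cone-like regions are interchangeable, the left-hand inequality of that lemma bounds $\|f_\alpha\chi_{\Delta^{c}(\alpha,\eta)}\|_{T_p^q}^{q}$ by
\[
\lesssim\ (1-\eta^2)^{\sigma q/p}\int_{\T}\left(\int_{\D}\frac{(1-|z|)^{\lambda-\sigma-1}}{|1-\bar\xi z|^{\lambda}}\,\frac{(1-|\alpha|)^{p\gamma-1-p/q-\sigma}}{|1-\bar\alpha z|^{p\gamma-2\sigma}}\,dm(z)\right)^{q/p}|d\xi|.
\]
The inner integral carries two singularities, at $\xi\in\T$ and at $\alpha\in\D$, and is handled by the standard Forelli--Rudin type estimate once $\lambda$ is large and $\sigma\in(0,1)$ small, so that $\lambda-\sigma-1>-1$ and the exponents fall in the convergent range; for $\gamma$ large the resulting double integral is bounded by a constant \emph{independent of} $\alpha$, this being the same balance of exponents that produces $\|f_\alpha\|_{T_p^q}\asymp1$, now with perturbed powers. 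Granting this, the tail is at most $C\,(1-\eta^2)^{\sigma q/p}$ with $C$ uniform in $\alpha$, so choosing $\eta$ close enough to $1$ makes it $\le\varepsilon^{q}$, which is (2).

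The main obstacle is precisely this last point, \emph{uniformity in} $\alpha$. For each fixed $\alpha$ one gets $\|f_\alpha\chi_{\Delta^{c}(\alpha,\eta)}\|_{T_p^q}\to0$ as $\eta\to1$ for free by dominated convergence, but that produces no single $\eta$ serving all $\alpha$ simultaneously. The real work is therefore to track the $\alpha$-dependence through the integral estimate and to check that the bounding constant does not degenerate as $\alpha\to\T$; this is exactly what forces the explicit extraction of the $(1-\eta^2)^{\sigma}$ factor and the use of Lemma~\ref{estimate 1}, rather than a soft compactness or convergence argument.
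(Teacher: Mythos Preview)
Your argument is correct but proceeds differently from the paper. Both use the same test functions $f_\alpha$, and both rest on Lemma~\ref{estimate 1}; the divergence is in how the smallness in $\eta$ is produced. The paper performs the M\"obius change of variable $z\mapsto\phi_\alpha(z)$ in the inner integral, which sends $\D\setminus\Delta(\alpha,\eta)$ onto the annulus $\{\eta\le|z|<1\}$; after using $1-|\phi_\alpha(z)|^2=|\phi_\alpha'(z)|(1-|z|^2)$ and the trivial bound $|1-\bar\alpha z|^{\gamma p-\lambda-2}\le 2^{\gamma p-\lambda-2}$ (here one needs $\gamma>\tfrac{\lambda+2}{p}$), the tail reduces to an explicit integral over $\eta\le r<1$ that is $\asymp 1-\eta$, and the remaining $\xi$-integral is the standard one-variable estimate. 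This is entirely self-contained. Your route instead extracts $(1-\eta^2)^{\sigma}$ pointwise and then bounds a two-singularity Forelli--Rudin integral over all of $\D$; with $c=\lambda-1-\sigma>-1$, $a=\lambda<c+2$, and $b=p\gamma-2\sigma$ large, that integral is indeed $\lesssim(1-|\alpha|)^{\lambda+1+\sigma-p\gamma}|1-\bar\xi\alpha|^{-\lambda}$, and the exponents then cancel against the prefactor exactly as you say, so the uniformity in $\alpha$ holds. The trade-off is that your version imports an off-diagonal Forelli--Rudin estimate not stated in the paper and yields the weaker decay $(1-\eta^2)^{\sigma q/p}$, whereas the change-of-variable argument stays elementary and gives the sharper rate $(1-\eta)^{q/p}$.
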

\begin{proof}
	Fix $\lambda>\max\{1,p/q\}$. First of all, we consider the family $\{f_\alpha\}$ of the functions 
	$$f_{\alpha}(z)=\frac{(1-|\alpha|^2)^{\gamma-\frac{1}{p}-\frac{1}{q}}}{(1-\overline{\alpha}z)^{\gamma}},\, z\in\D,$$
	where $\gamma>\frac{\lambda+2}{p}$. As we have seen previously, these functions satisfies \emph{(1)}. Let us see now that, for a given $\varepsilon>0$, there is $\eta\in (0,1)$ such that 
	\begin{align*}
\mathbb I & = \int_{\T} \left(\int_{\Gamma_{\frac 12}(\xi)\setminus \Delta(\alpha,\eta)} |f_{\alpha}(z)|^{p}\ \frac{dm(z)}{1-|z|}\right)^{q/p}\ |d\xi|\leq \varepsilon^{q}.
	\end{align*}
	
	Observe that 
	\begin{align*}
		\mathbb I & = \int_{\mathbb T} \left(\int_{\Gamma_{\frac 12}(\xi)\setminus \Delta(\alpha,\eta)} |f_{\alpha}(z)|^{p}\ \frac{dm(z)}{1-|z|}\right)^{q/p}\ |d\xi| \\
		& = \int_{\mathbb T} \left(\int_{\Gamma_{\frac 12}(\xi)} \chi_{\Delta^c(\alpha,\eta)}(z) |f_{\alpha}(z)|^{p}\ \frac{dm(z)}{1-|z|}\right)^{q/p}\ |d\xi|\,.
	\end{align*}
Since $\lambda > \max\{1,\, p/q\}$ and $\chi_{\Gamma_{\frac{1}{2}}(\xi)}(z)\lesssim \frac{1-|z|^2}{|1-\overline{\xi} z|}$, we have that
\begin{align*}
	\mathbb I   \lesssim \int_{\mathbb T} \left(\int_{\mathbb {D}} \left(\frac{1-|z|^2}{|1-\overline{\xi}z|} \right)^{\lambda}\chi_{\Delta^c(\alpha,\eta)}(z) |f_{\alpha}(z)|^{p}\ \frac{dm(z)}{1-|z|}\right)^{q/p}\ |d\xi|\,.
\end{align*}
Applying in the inner integral the  change of  variable
$$ z \mapsto \phi_{\alpha}(z) ,$$ 
where $ \phi_{\alpha}(z)  = \frac {\alpha - z}{1-\overline{\alpha}z}\,, $
we get that
\begin{align*}
	\mathbb I & \lesssim \int_{\mathbb T} \left(\int_{\mathbb {D}\setminus \Delta(0,\eta)} \frac{(1-|\phi_\alpha (z)|^2)^{\lambda-1}}{|1-\overline{\xi}\phi_\alpha (z)|^{\lambda}}  |f_{\alpha}(\phi_\alpha (z))|^{p}\ |\phi_{\alpha}'(z)|^2\ dm(z) \right)^{q/p}\ |d\xi|\\
	& =\int_{\mathbb T} \frac{1}{|1-\overline{\alpha} \xi|^{\lambda q/p}} \left(\int_{\mathbb {D}\setminus \Delta(0,\eta)} \frac{(1-|\phi_\alpha (z)|^2)^{\lambda-1}\,|1-\overline{\alpha}z|^{\lambda}}{|1-\overline{\phi_{\alpha}(\xi)} z|^{\lambda}}  |f_{\alpha}(\phi_\alpha (z))|^{p}\ |\phi_{\alpha}'(z)|^2\ dm(z) \right)^{q/p}\ |d\xi|\,.
\end{align*}
Since 
 $$1-|\phi_{\alpha}(z)|^2 = |\phi_{\alpha}'(z)|\, (1-|z|^2), \quad \alpha,z \in \mathbb D, $$
\begin{align*}
	\mathbb I & \lesssim \int_{\mathbb T} \frac{1}{|1-\overline{\alpha} \xi|^{\lambda q/p}} \left(\int_{\mathbb {D}\setminus \Delta(0,\eta)} \frac{(1-|z|^2)^{\lambda-1}\,\,|1-\overline{\alpha}z|^{\lambda}}{|1-\overline{\phi_{\alpha}(\xi)} z|^{\lambda}}  |f_{\alpha}(\phi_\alpha (z))|^{p}\ |\phi_{\alpha}'(z)|^{\lambda +1}\ dm(z) \right)^{q/p}\ |d\xi|\,.
\end{align*}
Verifying that 
$$
\,|1-\overline{\alpha}z|^{\lambda} \, |f_{\alpha}(\phi_{\alpha}(z))|^p\, |\phi'_{\alpha}(z)|^{\lambda +1} \, = \, (1-|\alpha|^2)^{\lambda - \frac pq} |1 - \overline{\alpha}z|^{\gamma p -\lambda -2}
$$
we get
\begin{align*}
	\mathbb I & \lesssim (1-|\alpha|^2)^{\lambda\,\frac qp -1} \int_{\mathbb T} \frac{1}{|1-\overline{\alpha} \xi|^{\lambda q/p}} \left(\int_{\mathbb {D}\setminus \Delta(0,\eta)} \frac{(1-|z|^2)^{\lambda-1}}{|1-\overline{\phi_{\alpha}(\xi)} z|^{\lambda}}  |1 - \overline{\alpha} z|^{\gamma p -\lambda -2}\ dm(z) \right)^{q/p}\ |d\xi|\,.
\end{align*}
Since $\gamma >  \frac {\lambda + 2}{p}, $
$$|1 - \overline{\alpha} z|^{\gamma p -\lambda -2} \leq 2^{\gamma p -\lambda -2}\,.$$
 Thus,
\begin{align*}
		\mathbb I & \lesssim (1-|\alpha|^2)^{\lambda\,\frac qp -1}  \int_{\mathbb T} \frac{1}{|1-\overline{\alpha} \xi|^{\lambda q/p}} \left(\int_{\mathbb {D}\setminus \Delta(0,\eta)} \frac{(1-|z|^2)^{\lambda-1}}{|1-\overline{\phi_{\alpha}(\xi)} z|^{\lambda}} \ dm(z) \right)^{q/p}\ |d\xi|\\
		& = (1-|\alpha|^2)^{\lambda\,\frac qp -1}  \int_{\mathbb T} \frac{1}{|1-\overline{\alpha} \xi|^{\lambda q/p}} \left(\int_{\eta}^1 \ \frac {1}{\pi} \int_0^{2\pi} \frac{1}{|1-\overline{\phi_{\alpha}(\xi)}r e^{i\theta}|^{\lambda}} \ d\theta\,(1- r^2)^{\lambda-1}\, r\, dr \right)^{q/p}\ |d\xi|\,.
 \end{align*}
Now, a standrad argument based on Parseval's identity, Stirling's formula and the fact that $|\phi_{\alpha}(\xi)|=1=|\xi|$ implies
\begin{align*}
\mathbb I & \lesssim (1-|\alpha|^2)^{\lambda\,\frac qp -1}  \int_{\mathbb T} \frac{1}{|1-\overline{\alpha} \xi|^{\lambda q/p}}\,\ |d\xi| \,(1-\eta)^{q/p}\,.
\end{align*}
Employing the same argument one more time, we obtain that
\begin{align*}
\mathbb I & \lesssim (1-\eta)^{q/p}\,.
\end{align*}
Therefore, we conclude that, for a given $\varepsilon>0$, there is $\eta\in (0,1)$ such that \emph{(2)} holds.
\end{proof}


Now we are in position to present the complete version of our main result.
\begin{theorem}\label{MainTheorem}
	Let $1\leq p,q<+\infty$ and $G$ be a measurable subset of  $\D$. The following assertions are equivalent:
	\begin{enumerate}
	\item[(a)] There is a $\beta \in (0,1)$ and a constant $K>0$ such that
	\begin{align*}
	\left(\int_{\T} \left(\int_{\Gamma_{\beta}(\xi)\cap G} |f(z)|^{p}\ \frac{dm(z)}{1-|z|}\right)^{q/p}\ |d\xi|\right)^{1/q}\geq \, K \, \|f\|_{T_{p}^{q}},
	\end{align*}
	for every $f\in AT_{p}^{q}$\,.
	\item[(b)] There exist an $\eta\in (0,1)$ and a $\delta>0$ such that
	
	\begin{align*}
	m(G\cap \Delta(\alpha,\eta))\geq \delta\, m(\Delta(\alpha,\eta))
	\end{align*}
	for $\alpha \in \D$\,.
	\item[(c)] There exist an $\eta\in (0,1)$ and a $\delta>0$ such that
	
	\begin{align*}
		m(G\cap \Delta_\eta(\alpha))\geq \,\delta\, m(\Delta_\eta(\alpha))
	\end{align*}
	for $\alpha \in \D$\,.
	\end{enumerate}
\end{theorem}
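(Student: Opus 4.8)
The plan is to run the cycle of implications by first settling the purely geometric equivalence (b)$\Leftrightarrow$(c), and then proving the substantive direction (c)$\Rightarrow$(a) followed by (a)$\Rightarrow$(b) via test functions.

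\medskip\noindent\textbf{Step 1: (b)$\Leftrightarrow$(c).} This is geometry. By Remark~\ref{RemarkDisks}, $\Delta_{\eta}(\alpha)\subseteq\Delta(\alpha,r)$ for $r\geq 2\eta/(1+\eta^{2})$, while the pseudohyperbolic disc $\Delta(\alpha,\eta)$ is itself a euclidean disc of radius $\asymp\eta(1-|\alpha|)$ whose euclidean center lies within distance $\asymp\eta^{2}(1-|\alpha|)$ of $\alpha$, so it is trapped between two euclidean discs $\Delta_{\eta''}(\alpha)\subseteq\Delta(\alpha,\eta)\subseteq\Delta_{\eta'}(\alpha)$ with $\eta',\eta''\asymp\eta$. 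Since all of these discs have area $\asymp(1-|\alpha|)^{2}$ by (\ref{psh area}), a lower density bound for $G$ in one family transfers to the other after adjusting $\eta$ and $\delta$. I would record this as a one-paragraph geometric remark.

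\medskip\noindent\textbf{Step 2: (c)$\Rightarrow$(a), the heart of the proof.} Fix $\varepsilon>0$ (to be chosen small at the very end). With $\eta,\delta$ from (c), pick $\lambda\in(0,2^{-p})$ small enough that Remark~\ref{R2} forces $m(E_{\lambda}(\alpha))>(1-\delta/2)\,m(\Delta_{\eta}(\alpha))$ for every $\alpha\notin B$. For $\alpha\notin\mathcal{A}\cup B$ I combine three facts: $E_{\lambda}(\alpha)$ fills more than $(1-\delta/2)$ of $\Delta_{\eta}(\alpha)$; condition (c) makes $G$ fill at least $\delta$ of it, so that $m(E_{\lambda}(\alpha)\cap G)>(\delta/2)\,m(\Delta_{\eta}(\alpha))$; and on $E_{\lambda}(\alpha)$ one has $|f(z)|^{p}\geq\lambda|f(\alpha)|^{p}$. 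Together with (\ref{psh area}) these give the pointwise bound
\[
|f(\alpha)|^{p}\,(1-|\alpha|)^{2}\ \lesssim\ \frac{1}{\lambda\delta}\int_{G\cap\Delta_{\eta}(\alpha)}|f(z)|^{p}\,dm(z),\qquad \alpha\notin\mathcal{A}\cup B.
\]
Dividing by $(1-|\alpha|)^{3}$, integrating in $\alpha$ over $\Gamma_{1/2}(\xi)\setminus(\mathcal{A}\cup B)$, and applying Fubini together with $1-|\alpha|\asymp 1-|z|$ on $\Delta_{\eta}(\alpha)$ and $\int_{\{\alpha:\,z\in\Delta_{\eta}(\alpha)\}}(1-|\alpha|)^{-3}\,dm(\alpha)\asymp(1-|z|)^{-1}$, the right-hand side collapses into $\int_{\Gamma_{\beta}(\xi)\cap G}|f(z)|^{p}\,dm(z)/(1-|z|)$, where $\beta$ is furnished by Remark~\ref{RemarkDisks}. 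On $\mathcal{A}\cup B$ I invoke Lemmas~\ref{L1} and \ref{L2} to bound the corresponding integral by $\varepsilon\,C\int_{\Gamma_{\beta}(\xi)}|f(z)|^{p}\,dm(z)/(1-|z|)$.

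\medskip\noindent\textbf{Step 3: absorption.} Adding the two parts yields, for each $\xi$,
\[
\int_{\Gamma_{1/2}(\xi)}|f(z)|^{p}\,\frac{dm(z)}{1-|z|}\ \lesssim\ \frac{1}{\lambda\delta}\int_{\Gamma_{\beta}(\xi)\cap G}|f(z)|^{p}\,\frac{dm(z)}{1-|z|}\ +\ \varepsilon\int_{\Gamma_{\beta}(\xi)}|f(z)|^{p}\,\frac{dm(z)}{1-|z|}.
\]
Raising to the power $q/p$ (using $(a+b)^{q/p}\lesssim a^{q/p}+b^{q/p}$), integrating in $\xi$, and using the aperture-independence of the tent norm (Lemma~\ref{estimate 1}) to identify both the $\Gamma_{1/2}$ and the $\Gamma_{\beta}$ averages with $\|f\|_{T_{p}^{q}}^{q}$, the error term becomes $\varepsilon^{q/p}C\,\|f\|_{T_{p}^{q}}^{q}$; choosing $\varepsilon$ small then absorbs it into the left-hand side and produces (a) with $K$ depending only on $p,q,\delta,\lambda$.

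\medskip\noindent\textbf{Step 4: (a)$\Rightarrow$(b), by contraposition.} Assume (b) fails; fix an arbitrary $\beta$ and $K$ and a small $\varepsilon$, and let $\eta$ be the radius provided by Lemma~\ref{Basic Lemma} for this $\varepsilon$. Negating (b) for this $\eta$ gives points $\alpha_{n}$ with $\delta_{n}:=m(G\cap\Delta(\alpha_{n},\eta))/m(\Delta(\alpha_{n},\eta))\to 0$. Testing (a) against $f_{\alpha_{n}}$, I split $\Gamma_{\beta}(\xi)\cap G$ into its part inside $\Delta(\alpha_{n},\eta)$ and its part outside. The outer part is controlled by $\|f_{\alpha_{n}}\chi_{\Delta^{c}(\alpha_{n},\eta)}\|_{T_{p}^{q}}\leq\varepsilon$ by property (2) of Lemma~\ref{Basic Lemma}; the inner part is estimated using $|f_{\alpha_{n}}(z)|\asymp(1-|\alpha_{n}|)^{-1/p-1/q}$ and $1-|z|\asymp 1-|\alpha_{n}|$ on $\Delta(\alpha_{n},\eta)$, the smallness $m(G\cap\Delta(\alpha_{n},\eta))<\delta_{n}(1-|\alpha_{n}|)^{2}$, and the fact that only an arc of $\xi$'s of length $\asymp 1-|\alpha_{n}|$ contributes, giving a bound $\lesssim\delta_{n}^{q/p}$. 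Thus the left side of (a) is $\lesssim(\varepsilon^{q}+\delta_{n}^{q/p})^{1/q}$ while the right side is $\asymp K$; letting $n\to\infty$ and taking $\varepsilon$ small relative to $K$ contradicts (a).

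\medskip
The main obstacle is Steps 2 and 3: the delicate matching of the parameters $\varepsilon,\lambda,\delta,\eta$, the Fubini interchange converting local averages back into a tent-space integral over $G$, and the final absorption, which crucially relies on the aperture-independence of $\|\cdot\|_{T_{p}^{q}}$ to compare the $\Gamma_{1/2}$ and $\Gamma_{\beta}$ averages.
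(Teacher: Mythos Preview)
Your proposal is correct and follows essentially the same route as the paper: the geometric equivalence (b)$\Leftrightarrow$(c) is Luecking's, (c)$\Rightarrow$(a) is obtained exactly via Lemmas~\ref{L1}--\ref{L2} and Remark~\ref{R2} followed by Fubini and an $\varepsilon$-absorption using aperture-independence (Lemma~\ref{estimate 1}), and (a)$\Rightarrow$(b) is the test-function argument based on Lemma~\ref{Basic Lemma}. Two cosmetic differences: in Step~2 the paper works only on $\D\setminus B$ (Remark~\ref{R2} needs just $\alpha\notin B$, and Lemma~\ref{L2} already subsumes Lemma~\ref{L1}), and in Step~4 the paper argues directly rather than by contraposition, first establishing the key estimate $\|f_{\alpha}\chi_{G\cap\Delta(\alpha,\eta)}\|_{T_p^q}\lesssim\big(m(G\cap\Delta(\alpha,\eta))/m(\Delta(\alpha,\eta))\big)^{1/p}$ via the change of variables $z\mapsto\phi_{\alpha}(z)$ and Lemma~\ref{estimate 1}; your heuristic ``only an arc of length $\asymp 1-|\alpha_n|$ contributes'' is the informal version of that computation and gives the same bound.
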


\begin{proof}
	$ \emph{(b)} \Leftrightarrow \emph{(c)} :$ The proof is the same as in \cite{Luecking_1981}\,.\\
$\emph{(c)} \Rightarrow \emph{(a)}\, :\,$ Assume \emph{(c)} holds. 
We want to prove that there is a $\beta \in (0,1)$ and a constant $K>0$ such that
\begin{align*}
	\left(\int_{\T} \left(\int_{\Gamma_{\beta}(\xi)\cap G} |f(z)|^{p} \ \frac{dm(z)}{1-|z|}\right)^{q/p} |d\xi|\right)^{1/q} \geq \,K \,\left(\int_{\T} \left(\int_{\Gamma_{1/2}(\xi)} |f(z)|^{p} \ \frac{dm(z)}{1-|z|}\right)^{q/p} |d\xi|\right)^{1/q}
\end{align*}
for all $f\in AT_{p}^{q}$.

Consider an $\varepsilon \in (0,1)$ and a 
$\lambda < \min\{ 1/2^p,\, \varepsilon^{\frac{2}{\delta}(2+2/p)}\}\,.$ By Remark ~\ref{R2} we obtain that
\begin{align*}
m(G\cap E_{\lambda}(\alpha))&=m(G\cap \Delta_\eta(\alpha))-m(G\cap (\Delta_\eta(\alpha)\setminus E_{\lambda}(\alpha)))\\
&\geq \delta m(\Delta_\eta(\alpha))-m(\Delta_\eta(\alpha)\setminus E_{\lambda}(\alpha))\\
&=\delta m(\Delta_\eta(\alpha))-m(\Delta_\eta(\alpha))+m(E_{\lambda}(\alpha))\\
&\geq \delta m(\Delta_\eta(\alpha)) -m(\Delta_\eta(\alpha)) +\left(1-\frac{\delta}{2}\right)m(\Delta_\eta(\alpha))\\
&=\frac{\delta}{2} m(\Delta_\eta(\alpha))
\end{align*}
for any $\alpha \in \mathbb D\setminus B$, where $B$ is the set in Lemma~\ref{L2}\,.

Let a pseudohyperbolic radius $ r=r(\eta) \in (0,1)$  as in Remark~\ref{RemarkDisks}.  We can find a
 $\beta\in (1/2,1)$ such that $\cup_{\alpha \in \Gamma_{1/2}(\xi)} \Delta(\alpha,r)\subset \Gamma_{\beta}(\xi)$\,.

 If $\alpha\in \Gamma_{1/2}(\xi)\setminus {B}$ then using the fact that $E_\lambda(f)(\alpha)\subset \Delta_\eta(\alpha)\subset \Gamma_{\beta}(\xi)$
\begin{align*}
&\frac{1}{m(\Delta_\eta(\alpha))} \int_{G\cap \Gamma_{\beta}(\xi)} \chi_{\Delta_\eta(\alpha)}(z) |f(z)|^{p} \frac{dm(z)}{1-|z|}\\
&\qquad\geq \frac{\delta}{2} \frac{1}{m(G \cap E_\lambda(\alpha))} \int_{G\cap \Gamma_{\beta}(\xi)} \chi_{\Delta_\eta(\alpha)}(z) |f(z)|^{p} \frac{dm(z)}{1-|z|}\\
&\qquad\geq \frac{\delta}{2} \frac{1}{m(G \cap E_\lambda(\alpha))} \int_{G\cap E_\lambda(\alpha)} \chi_{\Delta_\eta(\alpha)}(z) |f(z)|^{p} \frac{dm(z)}{1-|z|}\\
&\qquad\geq C_3 \frac{\delta\lambda}{2}\frac{|f(\alpha)|^{p}}{(1-|\alpha|)}\,.
\end{align*}
The constant $C_3$ depends on $\eta$.
Integrating over the set $\Gamma_{1/2}(\xi)\setminus {B}$ and 
applying Fubini's theorem we obtain

\begin{align*}
\int_{G\cap \Gamma_{\beta}(\xi)}|f(z)|^{p} \int_{\Gamma_{1/2}(\xi)\setminus {B}}  \frac{\chi_{\Delta(z,r)}(\alpha)}{m(\Delta_\eta(\alpha))}\ dm(\alpha) \frac{dm(z)}{1-|z|} \geq \, C_3 \,\frac{\delta\lambda}{2} \int_{\Gamma_{1/2}(\xi)\setminus {B}} |f(\alpha)|^{p}\ \frac{dm(\alpha)}{1-|\alpha|}.
\end{align*}

Using arguments we have seen before we conclude that there is a positive constant $C_4=C_4(\eta)$
 such that
$$
\int_{\Gamma_{1/2}(\xi)\setminus {B}}  \frac{\chi_{\Delta(z,r)}(\alpha)}{m(\Delta(\alpha,\eta))}\ dm(\alpha)\leq C_4
$$
for any $z\in \D$. Therefore, 
\begin{align*}
 \int_{G\cap \Gamma_{\beta}(\xi)}|f(z)|^{p}\  \frac{dm(z)}{1-|z|} \geq \, C_5\, \frac{\delta\lambda}{2} \int_{\Gamma_{1/2}(\xi)\setminus {B}}  |f(\alpha)|^{p}\ \frac{dm(\alpha)}{1-|\alpha|}.
\end{align*}
Applying Lemma~\ref{L2}, it follows that 
\begin{align*}
&\int_{G\cap \Gamma_{\beta}(\xi)}|f(z)|^{p}\  \frac{dm(z)}{1-|z|} \\
&\qquad\geq \,C_5\,\frac{\delta\lambda}{2} \int_{\Gamma_{1/2}(\xi)}  |f(\alpha)|^{p}\ \frac{dm(\alpha)}{1-|\alpha|}-\,C_5\,\frac{\delta\lambda}{2} \int_{\Gamma_{1/2}(\xi)\cap {B}}  |f(\alpha)|^{p}\ \frac{dm(\alpha)}{1-|\alpha|}\\
&\qquad\geq \,C_5\,\frac{\delta\lambda}{2} \int_{\Gamma_{1/2}(\xi)}  |f(\alpha)|^{p}\ \frac{dm(\alpha)}{1-|\alpha|}-\,C_5\,\frac{\delta\lambda\varepsilon C_2}{2} \int_{\Gamma_{\beta}(\xi)}  |f(\alpha)|^{p}\ \frac{dm(\alpha)}{1-|\alpha|}\,.\\
\end{align*}

So that,   
\begin{align*}
\left(\int_{G\cap \Gamma_{\beta}(\xi)}|f(z)|^{p}\  \frac{dm(z)}{1-|z|}\right)^{1/p}&+ \left(\,C_6\,\frac{\delta\lambda\varepsilon}{2}\right)^{1/p}\left(\int_{\Gamma_{\beta}(\xi)}  |f(\alpha)|^{p}\ \frac{dm(\alpha)}{1-|\alpha|}\right)^{1/p}\\
&\geq \left(\,C_5\,\frac{\delta\lambda}{2}\right)^{1/p}\left(\int_{\Gamma_{1/2}(\xi)}  |f(\alpha)|^{p}\ \frac{dm(\alpha)}{1-|\alpha|}\right)^{1/p}.
\end{align*}

By means of Minkowski's inequality we get that 
\begin{align*}
&\left(\int_{\T}\left(\int_{G\cap \Gamma_{\beta}(\xi)}|f(z)|^{p}\  \frac{dm(z)}{1-|z|}\right)^{q/p}\ |d\xi|\right)^{1/q}\\
&\quad +\left(\,C_6\,\frac{\delta\lambda\varepsilon }{2}\right)^{1/p}\left(\int_{\T}\left(\int_{\Gamma_{\beta}(\xi)}  |f(\alpha)|^{p}\ \frac{dm(\alpha)}{1-|\alpha|}\right)^{q/p}\ |d\xi|\right)^{1/q}\\
&\quad \geq \left(\,C_5\,\frac{\delta\lambda}{2}\right)^{1/p}\left(\int_{\T}\left(\int_{\Gamma_{1/2}(\xi)}  |f(\alpha)|^{p}\ \frac{dm(\alpha)}{1-|\alpha|}\right)^{q/p}\ |d\xi|\right)^{1/q}.
\end{align*}

According to  Lemma~\ref{estimate 1}, the $AT^q_p$-norm can be equivalently expressed by any cone-like region. So, if  we choose  $\varepsilon$ small enough then we end up that there is a constant $K>0$ such that 

\begin{align*}
\left(\int_{\T}\left(\int_{G\cap \Gamma_{\beta}(\xi)}|f(z)|^{p}\  \frac{dm(z)}{1-|z|}\right)^{q/p}\ |d\xi|\right)^{1/q}\geq K \|f\|_{T_{p}^{q}}.
\end{align*}

$\emph{(a)}\Rightarrow \emph{(b)} :$ Before starting with the proof, we need to show the following estimate:
\begin{align*}
\left(\int_{\T}\left(\int_{\Gamma_{\beta}(\xi)} \chi_{G\cap \Delta(\alpha,\eta)}(z)|f_\alpha(z)|^{p}\  \frac{dm(z)}{1-|z|}\right)^{q/p}\ |d\xi|\right)^{p/q} \lesssim \frac{m(G\cap \Delta(\alpha,\eta))}{m( \Delta(\alpha,\eta))},
\end{align*}
where $f_\alpha$ is the function considered in Lemma~\ref{Basic Lemma}.
Applying Lemma~\ref{estimate 1} and properties of pseudo-hyperbolic disks (see, e.g., \eqref{psh compar} and \eqref{psh area}), it follows that
\begin{align}
&\left(\int_{\T}\left(\int_{\Gamma_{\beta}(\xi)}\chi_{G\cap \Delta(\alpha,\eta)}(z)|f_\alpha(z)|^{p}\  \frac{dm(z)}{1-|z|}\right)^{q/p}\ |d\xi|\right)^{p/q}\nonumber\\
&\qquad \asymp \frac{(1-|\alpha|)^{1-p/q}}{m(\Delta(\alpha,\eta))} \left(\int_{\T}\left(\int_{ \Gamma_{\beta}(\xi)} \chi_{G\cap \Delta(\alpha,\eta)}(z)\  \frac{dm(z)}{1-|z|}\right)^{q/p}\ |d\xi|\right)^{p/q}\nonumber\\
&\qquad \asymp \frac{(1-|\alpha|)^{-p/q}}{m(\Delta(\alpha,\eta))} \left(\int_{\T}\left(\int_{G\cap \Delta(\alpha,\eta)} \frac{(1-|z|)^\lambda}{|1-z\overline{\xi}|^\lambda} \ dm(z)\right)^{q/p}\ |d\xi|\right)^{p/q}, \label{ineq:part1}
\end{align}
where $\lambda>\max\{1,p/q\}$. Notice that, by means of rotations, we can assume the pseudo-hyperbolic disk centered at $\alpha \in (0,1)$ without loss of generality.
Applying the change of variable $ z \mapsto \phi_{\alpha}(z)$, where $ {\phi_{\alpha}(z)  = \frac {\alpha - z}{1-{\alpha}z}}\, $, it follows
\begin{align*}
&\left(\int_{\T}\left(\int_{G\cap \Delta(\alpha,\eta)} \frac{(1-|z|^2)^\lambda}{|1-z\overline{\xi}|^\lambda} \ dm(z)\right)^{q/p}\ |d\xi|\right)^{p/q}\\
&\qquad=\left(\int_{\T}\left(\int_{\phi_\alpha(G\cap \Delta(\alpha,\eta))} \frac{(1-|\phi_{\alpha}(z)|^2)^\lambda}{|1-\overline{\xi}\phi_{\alpha}(z)|^\lambda} |\phi'_{\alpha}(z)|^2 \ dm(z)\right)^{q/p}\ |d\xi|\right)^{p/q}\\
&\qquad=\left(\int_{\T}\frac{(1-|\alpha|^2)^{\lambda q/p}}{|1-\alpha \xi|^{\lambda q/p}}\left(\int_{\phi_{\alpha}(G\cap \Delta(\alpha,\eta))} \frac{(1-|z|^2)^\lambda |\phi'_{\alpha}(z)|^2}{|1-\alpha z|^{\lambda}|1-z\overline{\phi_{\alpha}(\xi)}|^\lambda}  \ dm(z)\right)^{q/p}\ |d\xi|\right)^{p/q}.
\end{align*}
Applying the same change of variable again, we obtain
\begin{align*}
&\left(\int_{\T}\left(\int_{G\cap \Delta(\alpha,\eta)} \frac{(1-|z|^2)^\lambda}{|1-z\overline{\xi}|^\lambda} \ dm(z)\right)^{q/p}\ |d\xi|\right)^{p/q}\\
&\qquad =\left(\int_{\T}\frac{(1-|\alpha|^2)^{\lambda q/p}}{|1-\alpha \xi|^{\lambda q/p}}\left(\int_{G\cap \Delta(\alpha,\eta)} \frac{(1-|\phi_{\alpha}(z)|^2)^\lambda }{|1-\alpha \phi_{\alpha}(z)|^{\lambda}|1-\phi_{\alpha}(z)\overline{\phi_{\alpha}(\xi)}|^\lambda}  \ dm(z)\right)^{q/p}\ |d\xi|\right)^{p/q}\\
&\qquad \lesssim \left(\int_{\T}\frac{1}{|1-\alpha \xi|^{\lambda q/p}}\left(\int_{G\cap \Delta(\alpha,\eta)} |1-\alpha z |^{\lambda}   \ dm(z)\right)^{q/p}\ |d\xi|\right)^{p/q}\\
&\qquad \asymp m(G\cap \Delta(\alpha,\eta))(1-\alpha)^{\lambda} \left(\int_{\T}  \frac{1}{|1-\alpha\xi|^{\lambda q/p}}\ |d\xi|\right)^{p/q}. 
\end{align*}
By a standard argument, the same as the one used at the end of the proof of Lemma~\ref{Basic Lemma}, we get that
\begin{align}
\left(\int_{\T}\left(\int_{G\cap \Delta(\alpha,\eta)} \frac{(1-|z|^2)^\lambda}{|1-z\overline{\xi}|^\lambda} \ dm(z)\right)^{q/p}\ |d\xi|\right)^{p/q}&\lesssim 
 m(G\cap \Delta(\alpha,\eta))(1-|\alpha|)^{p/q}.\label{ineq:part2}
\end{align}
Thus, putting together \eqref{ineq:part1} and \eqref{ineq:part2} we have obtained that
\begin{align*}
\|f \chi_{G\cap \Delta(\alpha,\eta)}\|_{T_p^q}\lesssim \left(\frac{m(G\cap\Delta(\alpha,\eta))}{m(\Delta(\alpha,\eta))}\right)^{1/p},
\end{align*}
for $\alpha\in\D$ and $\eta\in (0,1)$.

Now, we continue with the proof of $\emph{(a)}\Rightarrow \emph{(b)}$. Given $\varepsilon< \frac{K C_1}{2}$. By Lemma~\ref{Basic Lemma}, there is $\eta\in (0,1)$ such that for all pseudo-hyperbolic disks $\Delta(\alpha,\eta)$ there is a function $f_\alpha$ satisfying the following conditions:
\begin{enumerate}
	\item There are constants $C_1,C_2>0$ such that $C_1\leq \|f \|_{T_p^q}\leq C_2$, and
	\item $\|f \chi_{\Delta(\alpha,\eta)^{c}}\|_{T_p^q}\leq\varepsilon$.
\end{enumerate}
 So that, applying \emph{(a)}, we have
\begin{align*}
\left(\frac{m(G\cap\Delta(\alpha,\eta))}{m(\Delta(\alpha,\eta))}\right)^{1/p}&\gtrsim \|f \chi_{G\cap \Delta(\alpha,\eta)}\|_{T_p^q} \geq \|f \chi_{G}\|_{T_p^q}-\|f \chi_{\Delta(\alpha,\eta)^{c}}\|_{T_p^q}\\
&\geq K\|f \|_{T_p^q}-\varepsilon\geq K C_1-\varepsilon\geq \frac{K C_1}{2}.
\end{align*}   
Therefore, we have proved that there is an $\eta\in (0,1)$ and a $\delta>0$ such that 
$$m(G\cap \Delta(\alpha,\eta))\geq \delta m(\Delta(\alpha,\eta))$$
for every $\alpha\in\D$.

\end{proof}

All the above lead to the following statement.
\begin{theorem} 
Let $1\leq p,q<\infty$ and $g\in \mathcal{B}$. The operator  $T_g: RM(p,q) \rightarrow RM(p,q)$  has closed range if and only if there are constants $\delta, c>0$ and $\eta\in(0,1)$ such that

\begin{align}\label{Tg condition}
m(G_c\cap \Delta(\alpha,\eta))\geq \delta \,m(\Delta(\alpha,\eta)), 
\end{align}
for every $\alpha\in\D$, where $G_c=\{z\in\D\ :\ |g'(z)|(1-|z|)>c\}$.
\end{theorem}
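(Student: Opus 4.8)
The plan is to reduce the statement to the reverse Carleson inequality of Theorem~\ref{MainTheorem} via the Littlewood--Paley description of the $RM(p,q)$ norm. Since $g\in\mathcal B$, the operator $T_g$ is bounded on $RM(p,q)$, and for nonconstant $g$ it is injective (the identity $T_g(f)=0$ forces $fg'\equiv 0$, hence $f\equiv 0$); as recalled in the introduction, for such an operator the closed range property is equivalent to being bounded below, so it suffices to characterise when there is $C>0$ with $\|T_g(f)\|_{RM(p,q)}\geq C\|f\|_{RM(p,q)}$ for every $f$. First I would use $(T_g f)'(z)=f(z)g'(z)$, $T_g(f)(0)=0$, the Littlewood--Paley equivalence (\ref{LP1}) and the identification $\rho_{p,q}(h)\asymp\|h\|_{T_p^q}$ to rewrite
\begin{align*}
\|T_g(f)\|_{RM(p,q)}\asymp\rho_{p,q}\big(f(z)g'(z)(1-|z|)\big)\asymp\left(\int_{\T}\left(\int_{\Gamma_{\beta}(\xi)}|f(z)|^{p}|g'(z)|^{p}(1-|z|)^{p}\,\frac{dm(z)}{1-|z|}\right)^{q/p}|d\xi|\right)^{1/q},
\end{align*}
where by Lemma~\ref{estimate 1} the cone-like region may be taken to be any $\Gamma_\beta(\xi)$. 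Writing $w(z)=|g'(z)|^{p}(1-|z|)^{p}$, the problem becomes the comparison of this weighted tent integral with $\|f\|_{T_p^q}$.

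For the sufficiency I assume (\ref{Tg condition}) holds for $G_c$. On $G_c$ one has $w(z)>c^{p}$, so restricting the inner integral to $\Gamma_{\beta}(\xi)\cap G_c$ gives
\begin{align*}
\|T_g(f)\|_{RM(p,q)}\gtrsim c\left(\int_{\T}\left(\int_{\Gamma_{\beta}(\xi)\cap G_c}|f(z)|^{p}\,\frac{dm(z)}{1-|z|}\right)^{q/p}|d\xi|\right)^{1/q}.
\end{align*}
Since $G_c$ satisfies assertion (b) of Theorem~\ref{MainTheorem}, the implication (b)$\Rightarrow$(a) of that theorem produces $\beta\in(0,1)$ and $K>0$ bounding the right-hand side below by $cK\|f\|_{T_p^q}\asymp cK\|f\|_{RM(p,q)}$. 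Hence $T_g$ is bounded below and has closed range.

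For the necessity, suppose $T_g$ is bounded below. Using $g\in\mathcal B$, which gives $w(z)\leq\|g\|_{\mathcal B}^{p}$ everywhere, I split the inner integral over $\Gamma_{\beta}(\xi)$ into its parts on $G_c$ and on $\D\setminus G_c$, where on the latter $w(z)\leq c^{p}$. The elementary inequality $(a+b)^{1/p}\leq a^{1/p}+b^{1/p}$ in the inner variable together with the triangle inequality in $L^{q}(\T,|d\xi|)$ then yields
\begin{align*}
\|T_g(f)\|_{RM(p,q)}\lesssim\|g\|_{\mathcal B}\,\big\|f\chi_{G_c}\big\|_{T_p^q}+c\,\|f\|_{T_p^q}.
\end{align*}
Combining this with $\|T_g(f)\|_{RM(p,q)}\geq C\|f\|_{RM(p,q)}\asymp C'\|f\|_{T_p^q}$ and choosing $c$ small enough that $c<C'/2$, I absorb the last term to obtain $\|f\chi_{G_c}\|_{T_p^q}\geq K\|f\|_{T_p^q}$ for all $f\in AT_p^q$, which is precisely assertion (a) of Theorem~\ref{MainTheorem} for $G=G_c$. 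The implication (a)$\Rightarrow$(b) then delivers $\eta\in(0,1)$ and $\delta>0$ with (\ref{Tg condition}).

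The routine points are the constant bookkeeping in the two splittings and the verification that the triangle inequalities apply, which they do because $p,q\geq 1$. The step requiring genuine care---and the crux of the argument---is the necessity: the weight $w$ is controlled only from above (by $\|g\|_{\mathcal B}^{p}$) and only from below (by $c^{p}$) on complementary regions, so one must fix the threshold $c$ \emph{small relative to the bounded-below constant} $C'$ before the error term on $\D\setminus G_c$ can be absorbed. Everything else is an application of Theorem~\ref{MainTheorem}.
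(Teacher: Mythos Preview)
Your proposal is correct and follows essentially the same route as the paper: both directions reduce to Theorem~\ref{MainTheorem} via the Littlewood--Paley/tent-space expression of $\|T_g f\|_{RM(p,q)}$, with the sufficiency obtained by restricting the inner integral to $G_c$ and the necessity by splitting into $G_c$ and its complement and choosing $c$ small enough to absorb the off-$G_c$ term. The only cosmetic difference is that you use $(a+b)^{1/p}\le a^{1/p}+b^{1/p}$ followed by Minkowski in $L^q(\T)$, whereas the paper works directly at the $q$-th power level with the inequality $(a+b)^{q/p}\lesssim a^{q/p}+b^{q/p}$; the arguments are otherwise identical.
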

\begin{proof}
	Assume first that there are a $c>0$, an $\eta\in (0,1)$ and a $\delta>0$ such that
	\begin{align*}
		m(G_c\cap \Delta(\alpha,\eta))\geq \delta m(\Delta(\alpha,\eta))\,,
	\end{align*}
	for every $\alpha\in\D$\,.
		 We employ Theorem~\ref{MainTheorem} and \cite[Proposition 3.1]{Aguilar-Galanopoulos} to get that there is a $\beta \in (0,1)$ and a constant $ K'>0$ such that 
	\begin{align*}
\left\{\int_{\T} \left(\int_{\Gamma_{\beta}(\xi)\cap G_c} |f(z)|^{p}\ \frac{dm(z)}{1-|z|}\right)^{q/p}\ |d\xi|\right\}^{1/q} &
\geq  K' \,\|f \|_{RM(p,q)}
	\end{align*}
for every $ f \in RM(p,q)$.

Applying the Littlewood-Paley type inequalities, \cite[Proposition 3.1]{Aguilar-Galanopoulos} and that 
$ T_g(f)(0) =0$, we get that 
\begin{align*}
	\|T_g (f)\|_{RM(p,q)}
	&\asymp  \left \{ \int_{\mathbb T} \left(  \int_{\Gamma_{\beta}(\xi)} |f(z)|^{p} \, |g'(z)|^p \, (1-|z|)^p\,\frac{dm(z)}{1-|z|}\right)^{q/p} \right \}^{\frac 1q} \\
	& \geq \left \{ \int_{\mathbb T} \left(  \int_{\Gamma_{\beta}(\xi)\cap G_c} |f(z)|^{p} \, |g'(z)|^p \, (1-|z|)^p\,\frac{dm(z)}{1-|z|}\right)^{q/p} \right \}^{\frac 1q} \\
	&\geq \,c\, \left \{ \int_{\mathbb T} \left(  \int_{\Gamma_{\beta}(\xi)\cap G_c} |f(z)|^{p} \, \frac{dm(z)}{1-|z|}\right)^{q/p} \right \}^{\frac 1q}\,.
\end{align*}

Combining the above, we conclude that there is a constant $C>0$ such that
\begin{equation}\label{useeq1}
	\|T_g (f)\|_{RM(p,q)} \geq  C \,\|f \|_{RM(p,q)}
\end{equation}
for every $f\in RM(p,q)$.

Now, assume there is a constant $C>0$ such that \eqref{useeq1} holds.
 Due to the Littlewood-Paley type inequalities, it is true that
 \begin{align*}
 & \int_{\mathbb T} \left( \int_{\Gamma_{\frac 12}(\xi)} |f(z)|^{p} \, |g'(z)|^p \,(1-|z|)^p\,\frac{dm(z)}{1-|z|}\right)^{q/p}  |d\xi|  \\
& \qquad \geq C' \int_{\T}  \left(\int_{\Gamma_{\frac 12}(\xi)} |f(z)|^{p}\ \frac{dm(z)}{1-|z|}\right)^{q/p} |d\xi|
\end{align*}
  for every $f\in RM(p,q)$.  
  Notice that
  \begin{align*}
  \int_{\mathbb T} & \left( \int_{\Gamma_{\frac 12}(\xi)} |f(z)|^{p} \, |g'(z)|^p \,(1-|z|)^p\,\frac{dm(z)}{1-|z|}\right)^{q/p}  |d\xi|\\
  & \lesssim  \int_{\T}  \left(\int_{\Gamma_{\frac 12}(\xi)\cap G_c} |f(z)|^{p}\ |g'(z)|^p \, (1-|z|)^p \,\frac{dm(z)}{1-|z|}\right)^{q/p}\ |d\xi|\\
  &\quad + \int_{\T}  \left(\int_{\Gamma_{\frac 12}(\xi)\setminus G_c} |f(z)|^{p}\ |g'(z)|^p \, (1-|z|)^p \,\frac{dm(z)}{1-|z|}\right)^{q/p}\ |d\xi|\\
  & \leq \|g\|^q_{\mathcal B}\, \int_{\T}  \left(\int_{\Gamma_{\frac 12}(\xi)\cap G_c} |f(z)|^{p} \,\frac{dm(z)}{1-|z|}\right)^{q/p}\ |d\xi|\\
  &\quad + c^{q} \, \int_{\T}  \left(\int_{\Gamma_{\frac 12}(\xi)} |f(z)|^{p}\ \,\frac{dm(z)}{1-|z|}\right)^{q/p}\ |d\xi|\,.
\end{align*}
Taking into account all the above, we conclude that 
\begin{align*}
	\|g\|^q_{\mathcal B}\,\int_{\T} & \left(\int_{\Gamma_{\frac 12}(\xi)\cap G_c} |f(z)|^{p} \,\frac{dm(z)}{1-|z|}\right)^{q/p}\ |d\xi| + c^{q} \, \int_{\T}  \left(\int_{\Gamma_{\frac 12}(\xi)} |f(z)|^{p}\ \,\frac{dm(z)}{1-|z|}\right)^{q/p}\ |d\xi|\\
	&\quad  \,\,\,\geq  C'' \,\int_{\T}  \left(\int_{\Gamma_{\frac 12}(\xi)} |f(z)|^{p}\ \frac{dm(z)}{1-|z|}\right)^{q/p} |d\xi|\,.
\end{align*}
Therefore, choosing $c>0$ small enough,  we  end up to the strength of  condition $(a)$ of Theorem~\ref{MainTheorem}. As a consequence, there exist an $\eta \in (0,1)$ and a $\delta >0$  such that 
\begin{align*}
	m(G_c\cap \Delta(\alpha,\eta))\geq \delta m(\Delta(\alpha,\eta))\,
\end{align*}
for every $\alpha\in\D$.
\end{proof}

The argument for $M_g$, where $g\in H^{\infty}$, is the same as that for $T_g$. We only have to use the level set $G_c=\{z \in \mathbb D \,:\, |g(z)| >c \}$.  
The approach for $S_g$ makes use of auxiliary lemmas based on 
the  quantity $|f'(z)| (1-|z|)$
instead of $|f(z)|\,.$
 Nevertheless, these new versions of auxiliary lemmas are proved 
 through minor modifications of those we have presented for $T_g$. Consequently, we only present the statements and we leave the details for the interested reader. 
\begin{lemma}\label{L1.2}
Let $1\leq p<\infty$, $\eta \in (0,1)$, $\varepsilon>0$, $f\in \mathcal{H}(\D)$ and
	\begin{align*}
		\mathcal{A}=\left\{\alpha\in\D\ : (1-|\alpha|)^{p}|f'(\alpha)|^{p}<\frac{\varepsilon}{m(\Delta_\eta(\alpha))} \int_{\Delta_\eta(\alpha)} (1-|z|)^{p}|f'(z)|^{p}\ dm(z)\right\}.
	\end{align*}
	Then, there is a constant $C_1=C_1(\eta)>0$ such that
	\begin{align*}
		\int_{\mathcal{A}\cap \Gamma_{1/2}(\xi)} (1-|z|)^p|f'(z)|^{p}\ \frac{dm(z)}{1-|z|}\leq \varepsilon C_1 \int_{ \Gamma_{\beta}(\xi)} (1-|z|)^p|f'(z)|^{p}\ \frac{dm(z)}{1-|z|}
	\end{align*}
for a non tangential region $\Gamma_{\beta}(\xi)$ with vertex at the same point $\xi \in \mathbb T$ as $\Gamma_{1/2}(\xi)$\,.
\end{lemma}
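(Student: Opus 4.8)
The plan is to run the proof of Lemma~\ref{L1} verbatim, changing only the integrand: replace $|f(z)|^{p}$ throughout by the nonnegative measurable weight $h(z):=(1-|z|)^{p}|f'(z)|^{p}$. The crucial observation is that the proof of Lemma~\ref{L1} never uses analyticity, subharmonicity, or any structural property of $|f|^{p}$; it treats it purely as a nonnegative measurable function, and the entire argument is measure-theoretic and geometric. Hence it transfers word for word to $h$, with $\mathcal A$ now defined by $h(\alpha)<\frac{\varepsilon}{m(\Delta_\eta(\alpha))}\int_{\Delta_\eta(\alpha)}h(z)\,dm(z)$.

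Concretely, I would first fix $\alpha\in\mathcal{A}$ and invoke that defining inequality. Integrating both sides over $\mathcal{A}\cap\Gamma_{1/2}(\xi)$ against $\frac{dm(\alpha)}{1-|\alpha|}$, and enlarging the inner domain from $\Delta_\eta(\alpha)$ to $\Gamma_\beta(\xi)$ via the inclusion in Remark~\ref{RemarkDisks} (which supplies the appropriate $\beta\in(1/2,1)$), I would apply Fubini's theorem to exchange the order of integration. As in Lemma~\ref{L1}, the symmetry of the pseudohyperbolic disc gives $\chi_{\Delta_\eta(\alpha)}(z)\le\chi_{\Delta(z,r)}(\alpha)$ for $\frac{2\eta}{1+\eta^2}\le r<1$, so after the swap the left-hand side is $\int_{\mathcal{A}\cap\Gamma_{1/2}(\xi)}h(\alpha)\frac{dm(\alpha)}{1-|\alpha|}$ and the right-hand side becomes $\varepsilon\int_{\Gamma_\beta(\xi)}h(z)\bigl(\int_{\mathcal{A}\cap\Gamma_{1/2}(\xi)}\frac{\chi_{\Delta(z,r)}(\alpha)}{m(\Delta_\eta(\alpha))}\frac{dm(\alpha)}{1-|\alpha|}\bigr)\,dm(z)$.

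The last step is to bound the inner $\alpha$-integral uniformly in $z$. Using the comparisons $1-|\alpha|\asymp 1-|z|$ and $m(\Delta_\eta(\alpha))\asymp(1-|\alpha|)^{2}\asymp m(\Delta(z,r))$ valid for $\alpha\in\Delta(z,r)$ (see \eqref{psh compar} and \eqref{psh area}), that integral is comparable to $(1-|z|)^{-1}$ with a constant $C_1=C_1(\eta)$, and absorbing it into the outer integral yields exactly the asserted inequality. The only feature that distinguishes this from Lemma~\ref{L1} is the extra factor $(1-|z|)^{p}$ hidden in $h$, but after Fubini this factor is a function of the outer variable $z$ and therefore simply rides along, playing no active role. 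I do not expect any genuine obstacle: the whole content is the remark that the measure-theoretic machinery of Lemma~\ref{L1} is insensitive to the choice of nonnegative integrand, which is precisely why the paper leaves the details to the reader.
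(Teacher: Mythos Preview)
Your proposal is correct and matches the paper's approach exactly: the paper states that Lemma~\ref{L1.2} is proved through ``minor modifications'' of Lemma~\ref{L1} and leaves the details to the reader, and your observation that the proof of Lemma~\ref{L1} is purely measure-theoretic (using no analyticity or subharmonicity of $|f|^p$) and hence transfers verbatim to the nonnegative integrand $h(z)=(1-|z|)^p|f'(z)|^p$ is precisely the intended modification.
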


Let $1\leq p<\infty$ and $\lambda\in (0,1)$. We define the set
\begin{align*}
	E_\lambda(\alpha)= \left\{z\in\Delta_\eta(\alpha)\ :\ (1-|z|)^p|f'(z)|^p\geq \lambda (1-|\alpha|)^p|f'(\alpha)|^p\right\}
\end{align*} 
and 
\begin{align*}
	B_\lambda f(\alpha)= \frac{1}{m(E_\lambda(\alpha))}\int_{E_\lambda(\alpha)} |f'(z)|^p (1-|z|)^{p}\ dm(z),\quad \alpha\in\D.
\end{align*}

\begin{lemma}\label{L2.2}
	If $1\leq p<\infty$,  $\varepsilon\in (0,1)$, $f\in \mathcal{H}(\D)$, $\lambda\in (0,\frac{1}{2^p})$ and
	\begin{align*}
		{B}= \left\{\alpha\in\D\ : |f'(\alpha)|(1-|\alpha|)^{p}<\varepsilon^{1+\frac{2}{p}}B_{\lambda}(f)(\alpha)\right\}.
	\end{align*}
	Then, there is a constant $C_2=C_2(\eta)>0$ such that
	\begin{align*}
		\int_{{B}\cap \Gamma_{1/2}(\xi)} (1-|z|)^{p}|f'(z)|^{p}\ \frac{dm(z)}{1-|z|}\leq \varepsilon C_2 \int_{\Gamma_{\beta}(\xi)} (1-|z|)^{p}|f'(z)|^{p}\ \frac{dm(z)}{1-|z|}
	\end{align*}
for a non tangential region $\Gamma_{\beta}(\xi)$ with vertex at the same $\xi \in \mathbb T$ as $\Gamma_{1/2}(\xi)$\,.
\end{lemma}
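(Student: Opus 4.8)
The plan is to repeat the proof of Lemma~\ref{L2} essentially verbatim, replacing the density $|f(z)|^{p}$ throughout by the weighted density $(1-|z|)^{p}|f'(z)|^{p}$ and replacing every appeal to Lemma~\ref{L1} by the corresponding appeal to its derivative analogue Lemma~\ref{L1.2}. Since $f'\in\mathcal{H}(\D)$ is again analytic, $|f'|$ is subharmonic, so the whole machinery transfers; the only genuinely new ingredient is that the extra weight $(1-|z|)^{p}$ must be controlled, and this is done via the comparability $1-|z|\asymp 1-|\alpha|$ on $\Delta_\eta(\alpha)$ from \eqref{psh compar}.

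First I would split the integral over $B\cap\Gamma_{1/2}(\xi)$ into the integral over $B\cap\Gamma_{1/2}(\xi)\cap\mathcal{A}$ plus the integral over $(B\cap\Gamma_{1/2}(\xi))\setminus\mathcal{A}$, where $\mathcal{A}$ is the set of Lemma~\ref{L1.2}. The first piece is bounded at once by $C_1(\eta)\,\varepsilon\int_{\Gamma_{\beta}(\xi)}(1-|z|)^{p}|f'(z)|^{p}\,\frac{dm(z)}{1-|z|}$ by Lemma~\ref{L1.2}. For the second piece I would invoke the defining inequality of the set $B$, integrate it over $(B\cap\Gamma_{1/2}(\xi))\setminus\mathcal{A}$ against $\frac{dm(\alpha)}{1-|\alpha|}$, and insert the lower bound $m(E_\lambda(\alpha))\gtrsim \varepsilon^{2/p}m(\Delta_\eta(\alpha))$ valid for $\alpha\notin\mathcal{A}$, which comes from the inclusion discussed below. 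After enlarging $E_\lambda(\alpha)$ to $\Delta_\eta(\alpha)$, applying $\alpha\notin\mathcal{A}$ once more to convert the resulting average back into $(1-|\alpha|)^{p}|f'(\alpha)|^{p}$, and finally enlarging the domain of integration from $(B\cap\Gamma_{1/2}(\xi))\setminus\mathcal{A}$ to $\Gamma_{\beta}(\xi)$, the same bookkeeping of powers of $\varepsilon$ as in Lemma~\ref{L2} leaves the factor $\varepsilon$ in front of $\int_{\Gamma_{\beta}(\xi)}(1-|z|)^{p}|f'(z)|^{p}\,\frac{dm(z)}{1-|z|}$.

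The step I expect to demand the most care is re-establishing the inclusion \eqref{inclusion}, namely that for $\alpha\notin\mathcal{A}$ the Euclidean disk of radius $\varepsilon^{1/p}\eta(1-|\alpha|)/(2C)$ centred at $\alpha$ lies in $E_\lambda(\alpha)$. I would apply Cauchy's integral formula and the sub-mean-value inequality to $f'$ on the circle $|w-\alpha|=\tfrac\eta2(1-|\alpha|)$ to obtain a bound of the form $|f'(z)-f'(\alpha)|\lesssim \frac{|z-\alpha|}{\eta(1-|\alpha|)}\,\frac{1}{m(\Delta_\eta(\alpha))}\int_{\Delta_\eta(\alpha)}|f'(u)|\,dm(u)$. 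Hölder's inequality upgrades the $L^{1}$-average of $|f'|$ to its $L^{p}$-average, and then $\alpha\notin\mathcal{A}$, together with $1-|z|\asymp 1-|\alpha|$ on $\Delta_\eta(\alpha)$ from \eqref{psh compar}, bounds that $L^{p}$-average by $\varepsilon^{-1/p}|f'(\alpha)|$; choosing $C$ large enough therefore forces $|f'(z)|\geq\tfrac12|f'(\alpha)|$ on the small disk. Using $1-|z|\asymp 1-|\alpha|$ once more, this yields $(1-|z|)^{p}|f'(z)|^{p}\geq \lambda(1-|\alpha|)^{p}|f'(\alpha)|^{p}$ as soon as $\lambda<1/2^{p}$ is small enough to swallow the comparability constant, which is precisely the inclusion. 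Combining the two pieces and taking $\varepsilon$ small completes the proof; the analogue of Remark~\ref{R2}, giving $m(E_\lambda(\alpha))>(1-\tfrac\delta2)m(\Delta_\eta(\alpha))$, transfers without change.
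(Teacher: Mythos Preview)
Your proposal is correct and matches the paper's approach: the paper does not prove Lemma~\ref{L2.2} separately but states that it follows from Lemma~\ref{L2} by minor modifications, which is precisely what you outline (applying the argument to $(1-|z|)^{p}|f'(z)|^{p}$ in place of $|f(z)|^{p}$, invoking Lemma~\ref{L1.2} in place of Lemma~\ref{L1}, and absorbing the extra weight via $1-|z|\asymp 1-|\alpha|$ on $\Delta_\eta(\alpha)$). The only caveat is that the exponent $\varepsilon^{1+2/p}$ (and the missing $p$-th power on $|f'(\alpha)|$) in the stated hypothesis appear to be typographical slips for $\varepsilon^{2+2/p}$ and $|f'(\alpha)|^{p}$, as in Lemma~\ref{L2}; with those corrections the bookkeeping you describe indeed leaves the required factor of~$\varepsilon$.
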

So that, the analogue of Theorem~\ref{MainTheorem} is the following result.
\begin{theorem}\label{2MainTheorem}
	Let $1\leq p,q<+\infty$ and $G\subset \D$ be a measurable subset. The following assertions are equivalent:
	\begin{enumerate}
		\item[(a)] There is a $\beta \in (0,1)$ and constant $K>0$ such that
		\begin{align*}
			\left(\int_{\T} \left(\int_{\Gamma(\xi)\cap G} |f'(z)|^{p} (1-|z|)^{p}\ \frac{dm(z)}{1-|z|}\right)^{q/p}\ |d\xi|\right)^{1/q}\geq K \|f\|_{T_{p}^{q}},
		\end{align*}
		for all $f\in AT_{p}^{q}$.
		\item[(b)] There exist an $\eta\in(0,1)$ and a $\delta>0$ such that
		\begin{align*}
			m(G\cap \Delta(\alpha,\eta))\geq \delta m(\Delta(\alpha,\eta))
		\end{align*}
		for $\alpha \in \D$.
		\item[(c)] There exist an $\eta\in(0,1)$ and a $\delta>0$ such that
		\begin{align*}
			m(G\cap \Delta_\eta(\alpha))\geq \delta m(\Delta_\eta(\alpha))
		\end{align*}
		for $\alpha \in \D$.
	\end{enumerate}
\end{theorem}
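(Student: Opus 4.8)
The plan is to transcribe the proof of Theorem~\ref{MainTheorem} almost verbatim, replacing the weight $|f(z)|^{p}$ by $|f'(z)|^{p}(1-|z|)^{p}$ throughout and invoking the derivative versions Lemma~\ref{L1.2} and Lemma~\ref{L2.2} in place of Lemma~\ref{L1} and Lemma~\ref{L2}. The equivalence $(b)\Leftrightarrow(c)$ involves only the geometry of the discs and is exactly as in \cite{Luecking_1981}, since $f$ plays no role there.

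For $(c)\Rightarrow(a)$ I would first record the analogue of Remark~\ref{R2} for the sets $E_\lambda(\alpha)$ and the averages $B_\lambda f(\alpha)$ now attached to $|f'(z)|^{p}(1-|z|)^{p}$; the lower bound for $m(E_\lambda(\alpha))/m(\Delta_\eta(\alpha))$ is a purely formal consequence of the definitions and carries over once $|f(\alpha)|^{p}$ is replaced by $|f'(\alpha)|^{p}(1-|\alpha|)^{p}$. Assuming $(c)$ and choosing $\lambda<\min\{1/2^{p},\,\varepsilon^{(2/\delta)(2+2/p)}\}$ as before gives $m(G\cap E_\lambda(\alpha))\geq\frac{\delta}{2}m(\Delta_\eta(\alpha))$ for $\alpha\notin B$, with $B$ the set of Lemma~\ref{L2.2}, and hence the pointwise bound
\begin{align*}
\frac{1}{m(\Delta_\eta(\alpha))}\int_{G\cap\Gamma_{\beta}(\xi)}\chi_{\Delta_\eta(\alpha)}(z)\,|f'(z)|^{p}(1-|z|)^{p}\,\frac{dm(z)}{1-|z|}\gtrsim\frac{\delta\lambda}{2}\,\frac{|f'(\alpha)|^{p}(1-|\alpha|)^{p}}{1-|\alpha|}
\end{align*}
for $\alpha\in\Gamma_{1/2}(\xi)\setminus B$. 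Integrating over $\Gamma_{1/2}(\xi)\setminus B$, applying Fubini's theorem together with \eqref{psh compar} and \eqref{psh area}, then Lemma~\ref{L2.2} to absorb the error over $B$ and Minkowski's inequality in the exponent $q/p$, one reaches the reverse inequality over the fixed region $\Gamma_{\beta}(\xi)$; Lemma~\ref{estimate 1} then returns me to $\|f\|_{T_p^q}$. No step differs from Theorem~\ref{MainTheorem} beyond carrying the extra weight.

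The direction $(a)\Rightarrow(b)$ is again driven by a family of test functions, and this is where the only genuinely new point appears. For $f_\alpha$ as in Lemma~\ref{Basic Lemma} one has $|f'_\alpha(z)|(1-|z|)=\gamma|\alpha|\,\frac{1-|z|}{|1-\overline{\alpha}z|}\,|f_\alpha(z)|$, so on $\Delta(\alpha,\eta)$ one gets $|f'_\alpha(z)|(1-|z|)\asymp|f_\alpha(z)|$ (constants depending on $\eta$) whenever $|\alpha|$ is bounded below; this comparison imports the upper estimate already established inside Theorem~\ref{MainTheorem}, namely $\|f'_\alpha(z)(1-|z|)\chi_{G\cap\Delta(\alpha,\eta)}\|_{T_p^q}\lesssim(m(G\cap\Delta(\alpha,\eta))/m(\Delta(\alpha,\eta)))^{1/p}$. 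Granting in addition a family with $\|f'_\alpha(z)(1-|z|)\|_{T_p^q}\asymp1$ and $\|f'_\alpha(z)(1-|z|)\chi_{\Delta^{c}(\alpha,\eta)}\|_{T_p^q}\leq\varepsilon$, the triangle inequality argument of Theorem~\ref{MainTheorem} runs unchanged: for $\varepsilon$ small, $(a)$ yields
\begin{align*}
\left(\frac{m(G\cap\Delta(\alpha,\eta))}{m(\Delta(\alpha,\eta))}\right)^{1/p}\gtrsim\|f'_\alpha(1-|z|)\chi_{G}\|_{T_p^q}-\|f'_\alpha(1-|z|)\chi_{\Delta^{c}(\alpha,\eta)}\|_{T_p^q}\geq K\,C_1-\varepsilon\geq\frac{K\,C_1}{2},
\end{align*}
which is precisely $(b)$.

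The step I expect to be the real obstacle is producing that test family, because the functions of Lemma~\ref{Basic Lemma} \emph{degenerate} for the weight $|f'(z)|(1-|z|)$: at $\alpha=0$ the function $f_0$ is constant, so $f_0'\equiv0$, and in fact $\|f'_\alpha(z)(1-|z|)\|_{T_p^q}\to0$ as $\alpha\to0$; thus $\{f_\alpha\}$ cannot detect the density of $G$ near the centre of $\D$, where $(b)$ is nonetheless demanded. The natural remedy is to replace $f_\alpha$ by a function $h_\alpha$ whose \emph{derivative} is of the standard type, i.e. $h_\alpha(z)=\int_0^z\frac{(1-|\alpha|^2)^{\sigma}}{(1-\overline{\alpha}w)^{\gamma}}\,dw$, with a normalising exponent $\sigma$ chosen so that $\|h'_\alpha(z)(1-|z|)\|_{T_p^q}\asymp1$; then $h'_\alpha$ never vanishes (at $\alpha=0$ it is a nonzero constant) and the construction is uniform in $\alpha\in\D$. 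Establishing the two properties for this $h_\alpha$ amounts to re-running the Parseval/Stirling estimate of Lemma~\ref{Basic Lemma} with the shifted exponents and the extra factor $(1-|z|)$; since $\gamma$ remains free, the only bookkeeping to verify is that the exponent count still closes, after which the argument concludes exactly as above.
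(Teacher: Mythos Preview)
Your proposal matches exactly what the paper intends: it does not prove Theorem~\ref{2MainTheorem} separately but only states Lemmas~\ref{L1.2} and~\ref{L2.2} and declares the argument to be a ``minor modification'' of Theorem~\ref{MainTheorem}. For $(b)\Leftrightarrow(c)$ and $(c)\Rightarrow(a)$ your transcription is correct and is precisely what the authors have in mind.

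For $(a)\Rightarrow(b)$ you actually go further than the paper, which is silent on the test-family step. Your observation that the standard $f_\alpha$ of Lemma~\ref{Basic Lemma} degenerate at $\alpha=0$ (since $f_0'\equiv0$) is a genuine point the paper does not address; note too that statement~\emph{(a)} as written fails for constant functions, so the right-hand side must be read as $\|f'(z)(1-|z|)\|_{T_p^q}$ (equivalently, the $AT_p^q/\C$ norm), which is consistent with the application to $S_g$ on $RM(p,q)/\C$. Your remedy via primitives $h_\alpha$ with $h_\alpha'=\dfrac{(1-|\alpha|^2)^{\sigma}}{(1-\overline{\alpha}z)^{\gamma}}$ is sound and the exponent bookkeeping closes: on $\Delta(\alpha,\eta)$ one has $|h_\alpha'(z)|(1-|z|)\asymp|f_\alpha(z)|$ once $\sigma$ is chosen so that the powers of $(1-|\alpha|)$ match, after which the three estimates (norm $\asymp1$, smallness on $\Delta^{c}(\alpha,\eta)$, and the localized upper bound) follow by the same change-of-variable computation as in Lemma~\ref{Basic Lemma}, now carrying the extra factor $(1-|z|)^{p-1}$ through. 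An equally valid and slightly cheaper alternative you could record: keep the original $f_\alpha$ for $|\alpha|\geq\tfrac12$ (where $|f_\alpha'(z)|(1-|z|)\asymp|f_\alpha(z)|$ on $\Delta(\alpha,\eta)$ as you noted), obtain $(c)$ for those $\alpha$, and then extend to all $\alpha\in\D$ by enlarging $\eta$ so that every $\Delta_{\eta'}(\alpha)$ with $|\alpha|<\tfrac12$ contains a disc $\Delta_\eta(\alpha')$ with $|\alpha'|=\tfrac12$; the area ratios are uniformly controlled, and this avoids re-running the Parseval/Stirling calculation.
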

Putting everything together, we get the desired result for the operator $S_g$.
\begin{theorem} 
	Let $1\leq p,q<\infty$ and $g\in H^\infty$. The operator $S_g: RM(p,q)/\mathbb C \rightarrow RM(p,q)/\mathbb C$ has closed range if and only if there is an $\eta \in (0,1)$, a $\delta >0$ and $c>0$ such that
	\begin{align*}
		m(G_c\cap \Delta(\alpha,\eta))\geq \delta m(\Delta(\alpha,\eta)),
	\end{align*}
	for every $\alpha\in\D$, where $G_c=\{z\in\D\ :\ |g(z)|>c\}$.
\end{theorem}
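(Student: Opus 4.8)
The plan is to imitate, step for step, the argument given above for $T_g$, with Theorem~\ref{MainTheorem} replaced by its derivative analogue Theorem~\ref{2MainTheorem} and the weight $|g'(z)|(1-|z|)$ replaced by $|g(z)|$. Since $g\in H^\infty$, Proposition~\ref{Sg} guarantees that $S_g$ is bounded on $RM(p,q)$, and as $S_g$ annihilates the constants and is injective on $RM(p,q)/\mathbb C$, having closed range there is equivalent to being bounded below, as recalled in the introduction. The starting point is the Littlewood--Paley inequality \eqref{LP1}: taking the infimum over additive constants it yields
\[
\|f\|_{RM(p,q)/\mathbb C}\asymp \rho_{p,q}(f'(z)(1-|z|))\asymp\|f'(z)(1-|z|)\|_{T_p^q},
\]
the second equivalence coming from \cite{Aguilar-Galanopoulos}. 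Since $S_g(f)(0)=0$ and $S_g(f)'=f'g$, the same chain applied to $S_g(f)$ gives
\[
\|S_g(f)\|_{RM(p,q)/\mathbb C}\asymp\left\{\int_{\T}\left(\int_{\Gamma(\xi)}|f'(z)|^p|g(z)|^p(1-|z|)^p\,\frac{dm(z)}{1-|z|}\right)^{q/p}|d\xi|\right\}^{1/q}.
\]
Thus boundedness below of $S_g$ on the quotient is equivalent to the lower bound of the right-hand side by $\|f'(z)(1-|z|)\|_{T_p^q}\asymp\|f\|_{T_p^q}$.

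For the sufficiency direction I would assume condition (b) for $G_c=\{z:|g(z)|>c\}$ and feed it into Theorem~\ref{2MainTheorem}, obtaining a $\beta\in(0,1)$ and a $K>0$ with
\[
\left(\int_{\T}\left(\int_{\Gamma_\beta(\xi)\cap G_c}|f'(z)|^p(1-|z|)^p\,\frac{dm(z)}{1-|z|}\right)^{q/p}|d\xi|\right)^{1/q}\geq K\|f\|_{T_p^q}.
\]
Restricting the integral computing $\|S_g(f)\|_{RM(p,q)/\mathbb C}$ to $G_c$ and using $|g(z)|>c$ there gives $\|S_g(f)\|_{RM(p,q)/\mathbb C}\gtrsim cK\|f\|_{T_p^q}\asymp\|f\|_{RM(p,q)/\mathbb C}$, which is exactly boundedness below.

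For the necessity direction I would start from the bounded-below inequality, pass to the tent-space reformulation above, and split the inner integral over $\Gamma(\xi)\cap G_c$ and $\Gamma(\xi)\setminus G_c$, bounding $|g|\le\|g\|_{H^\infty}$ on the first piece and $|g|\le c$ on the second. Combining with the elementary inequality $(a+b)^{q/p}\lesssim a^{q/p}+b^{q/p}$ and the tent-space Littlewood--Paley identity $\|f\|_{T_p^q}\asymp\|f'(z)(1-|z|)\|_{T_p^q}$ produces
\[
\|f\|_{T_p^q}^q\lesssim\|g\|_{H^\infty}^q\int_{\T}\left(\int_{\Gamma(\xi)\cap G_c}|f'(z)|^p(1-|z|)^p\,\frac{dm(z)}{1-|z|}\right)^{q/p}|d\xi|+c^q\|f\|_{T_p^q}^q.
\]
Choosing $c$ small enough absorbs the final term into the left-hand side and leaves precisely condition (a) of Theorem~\ref{2MainTheorem} for $G_c$, whence that theorem delivers condition (b).

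The step I expect to require the most care is the necessity absorption argument: one must check that the constant multiplying $c^q\|f\|_{T_p^q}^q$ depends only on the fixed norm equivalences and not on $f$, so that a single $c$ works simultaneously for every $f$, and the finiteness of $\|f\|_{T_p^q}$ for $f\in AT_p^q$ is what legitimizes the subtraction. The remaining points---the correct identification of the quotient norm (the constant part being killed by the infimum in \eqref{LP1}) and the harmless $(a+b)^{q/p}$ versus Minkowski bookkeeping when $q<p$---are routine and already appear in the proof of the $T_g$ case above.
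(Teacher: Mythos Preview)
Your proposal is correct and follows precisely the route the paper indicates: the authors state that the $S_g$ case is obtained by repeating the $T_g$ argument with Theorem~\ref{2MainTheorem} in place of Theorem~\ref{MainTheorem} and the level set $G_c=\{|g|>c\}$ in place of $\{|g'(z)|(1-|z|)>c\}$, leaving the details to the reader. Your write-up supplies exactly those details, including the quotient-norm identification via \eqref{LP1} and the split-and-absorb necessity step, so nothing further is needed.
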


\section{Examples}
\begin{example}
The univalent functions does not satisfy the condition~\eqref{Tg condition}. Assume that there is a univalent function $g$ satisfying this condition. Using \cite[Theorem 3.4.9.]{Bracci_Contreras_Diaz-Madrigal} we have that
$$|g'(z)|(1-|z|)\leq d(g(z),\partial g(\D))\leq \frac{1}{4}|g'(z)|(1-|z|),$$
for a univalent function $g$ and $z\in\D$. So, this implies that $d(g(\D),\partial g(\D))\geq c$. Moreover, it is known that there is $\xi\in\T$ such that $\angle\lim\limits_{z\rightarrow\xi} g(z)=L\in\C$. Then, by condition~\eqref{Tg condition} we can find a sequence $\{z_k\}$ in the non-tangential region such that $z_k\rightarrow \xi$. However, we have
$$|g(z_k)-L|\geq d(g(z_k),\partial g(\D))\geq c$$
for all $k\in \N$. Therefore, we obtain a contradiction.
\end{example}
\begin{proposition}
There are functions $f\in\mathcal{B}$ satisfying that there are constants $\delta>0$, $\eta>0$ and $c>0$ such that
\begin{align*}
	m(G_c\cap \Delta(\alpha,\eta))\geq \delta m(\Delta(\alpha,\eta)),
\end{align*}
where $G_c=\{z\in\D\ :\ |f'(z)|(1-|z|)>c\}$, for every $\alpha\in\D$.
\end{proposition}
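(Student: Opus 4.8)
The plan is to produce the symbol explicitly as a Hadamard lacunary series and to verify the distortion condition \eqref{Tg condition} by controlling $h(z):=|f'(z)|(1-|z|)$ on every pseudohyperbolic disc. I would take
\[
f(z)=\sum_{k=0}^{\infty} z^{2^{k}}, \qquad z\in\D ,
\]
and first recall the classical fact that such a series lies in $\mathcal{B}$; in particular $h(z)\le M:=\|f\|_{\mathcal{B}}<\infty$ on all of $\D$. The whole problem then reduces, through a Chebyshev argument, to a \emph{uniform lower bound for the average of $h^{2}$}: if I can produce constants $A>0$ and $\eta\in(0,1)$ with
\[
\frac{1}{m(\Delta(\alpha,\eta))}\int_{\Delta(\alpha,\eta)} |f'(z)|^{2}(1-|z|)^{2}\,dm(z)\ \ge\ A, \qquad \alpha\in\D,
\]
then, splitting $\Delta=\Delta(\alpha,\eta)$ according to $\{h>c\}$ and using $h\le M$, I obtain $A\,m(\Delta)\le M^{2}m(\Delta\cap\{h>c\})+c^{2}m(\Delta)$, so that the choice $c^{2}=A/2$ yields $m(G_{c}\cap\Delta)\ge \frac{A}{2M^{2}}\,m(\Delta)$, which is exactly \eqref{Tg condition} with $\delta=A/(2M^{2})$.

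To establish the average lower bound I would fix $\alpha$, set $s=1-|\alpha|$ and let $j$ be the integer with $2^{j}\asymp 1/s$, so that on $\Delta(\alpha,\eta)$ one has $(1-|z|)\asymp s$ and $|z|^{2^{k}}\asymp e^{-2^{k-j}}$. Passing to polar coordinates and freezing the radius, the task becomes an angular $L^{2}$ estimate for the lacunary sum $\sum_{k}a_{k}(r)e^{i2^{k}\theta}$, where $a_{k}(r)=2^{k}r^{2^{k}-1}$ is peaked at $k=j$ with $a_{j}\asymp 1/s$ and geometric decay away from $k=j$. The key normalization is the rescaling $\psi=2^{j}(\theta-\theta_{c})$, where $\theta_{c}$ is the central argument of the disc; under it the relevant arc becomes the fixed interval $[-\eta,\eta]$ and the sum becomes $\sum_{k}d_{k}e^{i\beta_{k}\psi}$ with \emph{scale-independent} frequencies $\beta_{k}=2^{k-j}$ and $|d_{k}|=a_{k}(r)$. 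The point of the lacunary choice is precisely that this frequency configuration $\{2^{m}\}$ is the same for every $j$.

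The heart of the matter is then a single, scale-free inequality: for the self-similar frequency set $\{2^{m}\}$ there is a constant $\lambda>0$ (depending only on $\eta$) with
\[
\int_{-\eta}^{\eta}\Bigl|\sum_{m} d_{m}e^{i2^{m}\psi}\Bigr|^{2}\,d\psi\ \ge\ \lambda\sum_{m}|d_{m}|^{2}.
\]
This is an Ingham/Gram-matrix type lower bound; since the exponentials $e^{i2^{m}\psi}$ are linearly independent on any interval and the configuration does not move with $j$, the associated Gram matrix is positive definite with a smallest eigenvalue that does not depend on $j$, and the geometric decay of the $|d_{m}|$ controls the infinite tail after truncation. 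Applying it with the diagonal term $|d_{j}|^{2}\asymp 1/s^{2}$ gives the angular estimate $\int_{\mathrm{arc}}|f'|^{2}\,d\theta\gtrsim 1/s$, and integrating over the $O(s)$-range of radii inside $\Delta(\alpha,\eta)$ (remembering the factor $(1-|z|)^{2}\asymp s^{2}$ and $m(\Delta)\asymp s^{2}$) produces the desired uniform constant $A$.

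I expect the main obstacle to be exactly this last inequality, namely obtaining a lower bound $\lambda$ that is \emph{independent of the scale} $j$ and of the (adversarial) phases of the $d_{m}$. The naive orthogonality estimate fails because the pseudohyperbolic disc subtends only an $O(1)$ multiple of the dominant period, so the exponentials are far from orthogonal on $[-\eta,\eta]$; what saves the argument is the self-similarity of the Hadamard frequencies, which reduces the uniform-in-$\alpha$ claim to the positive-definiteness of one fixed Gram matrix. Once this scale-invariant spectral estimate is in hand, the passage to \eqref{Tg condition} through the Bloch upper bound and Chebyshev's inequality is routine.
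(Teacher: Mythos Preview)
Your approach is viable and genuinely different from the paper's. The paper does not go through an $L^{2}$ average at all: it quotes \cite{ramey_bounded_1991} for a \emph{pointwise} lower bound---for a suitable lacunary series with large base $q$ one has $|f'(z)|(1-|z|)\ge c$ throughout each annulus $1-q^{-k}\le|z|\le 1-q^{-k-1/2}$---and then checks by elementary geometry that every $\Delta(\alpha,\eta)$ with $\eta\ge 1-q^{-3/2}$ contains a Euclidean disc of radius $\asymp(1-|\alpha|)$ lying entirely in one such annulus, whence $m(G_{c}\cap\Delta(\alpha,\eta))\gtrsim(1-|\alpha|)^{2}$. Your route via the averaged lower bound plus Chebyshev is more self-contained (no need to import the Ramey--Ullrich pointwise estimate), at the price of the arc inequality for lacunary exponentials.

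That arc inequality, however, is not correctly justified in your sketch. The Gram operator of $\{e^{i2^{m}\psi}\}_{m}$ on a bounded interval is \emph{not} bounded below: as $m\to-\infty$ the exponentials accumulate at the constant function, so no uniform smallest eigenvalue exists, and Ingham's theorem does not apply because the frequency gaps $2^{m}-2^{m-1}=2^{m-1}$ tend to $0$. What actually makes the estimate work is the specific coefficient profile together with a \emph{sufficiently large} pseudohyperbolic radius. Writing $I$ for the angular arc of $\Delta(\alpha,\eta)$ and $c_{k}=2^{k}\rho^{2^{k}-1}$, expand
\[
\int_{I}\Bigl|\sum_{k}c_{k}e^{i2^{k}\theta}\Bigr|^{2}d\theta
=|I|\sum_{k}|c_{k}|^{2}+O\!\Bigl(\sum_{k\neq l}\frac{|c_{k}||c_{l}|}{|2^{k}-2^{l}|}\Bigr),
\]
using $\bigl|\int_{I}e^{i\nu\theta}d\theta\bigr|\le 2/|\nu|$. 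With $c_{k}\asymp 2^{k}$ for $k\le j$ and super-exponential decay for $k>j$, the off-diagonal sum is $O(1/s)$ with a constant independent of $|I|$, while the diagonal is $\asymp|I|/s^{2}$. Since $|I|\asymp\dfrac{\eta}{1-\eta^{2}}\,s$, choosing $\eta$ close enough to $1$ makes the diagonal dominate and yields your uniform $A>0$. The paper's argument likewise needs $\eta$ close to $1$, so this is no extra restriction; but you should replace the Gram/Ingham heuristic by this direct diagonal--versus--off-diagonal computation (or by a fixed finite truncation combined with the same large-$\eta$ choice).
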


\begin{proof}
Without loss of generality we can assume that $\alpha\in [0,1)$. Following the proof of \cite[Proposition 5.4]{ramey_bounded_1991}, we can show that for a given large positive integer $q$ we can find a function $f\in\mathcal{B}$ and a constant $c>0$ such that
$$ |f'(z)|(1-|z|)\geq c,\quad 1-\frac{1}{q^{k}}\leq |z| \leq 1-\frac{1}{q^{k+\frac{1}{2}}}, \quad k=1,2,\dots. $$ 

First of all, we will show that if $\eta \geq 1-\frac{1}{q}$ we have that
\begin{align*}
\frac{(1-\eta^2)\alpha}{1-\alpha^2\eta^2}+\frac{(1-\alpha^2)\eta}{1-\alpha^2\eta^2}&=\frac{\alpha+\eta}{1+\alpha\eta}\geq 1-\frac{1}{q^{k+\frac{1}{2}}}\\
\frac{(1-\eta^2)\alpha}{1-\alpha^2\eta^2}-\frac{(1-\alpha^2)\eta}{1-\alpha^2\eta^2}&=\frac{\alpha-\eta}{1-\alpha\eta}\leq 1-\frac{1}{q^{k}}
\end{align*}
 for $\alpha\in\left[1-\frac{1}{q^{k-\frac{1}{2}}}, 1-\frac{1}{q^{k+\frac{1}{2}}}\right]$, $k=1,2,\dots$.
 Since the functions $h_\eta(\alpha)=\frac{\alpha+\eta}{1+\alpha\eta}$ and $g_\eta(\alpha)=\frac{\alpha-\eta}{1-\alpha\eta}$ are increasing functions, the previous inequalities have to satisfy the following
 \begin{align*}
 \frac{1-\frac{1}{q^{k-\frac{1}{2}}}+\eta}{1+\eta\left(1-\frac{1}{q^{k-\frac{1}{2}}}\right)}\geq 1-\frac{1}{q^{k+\frac{1}{2}}},\quad 
 \frac{1-\frac{1}{q^{k+\frac{1}{2}}}-\eta}{1-\eta\left(1-\frac{1}{q^{k+\frac{1}{2}}}\right)}\leq 1-\frac{1}{q^{k}}
 \end{align*}
for $k=1,2,\dots$.
Hence, we obtain that $\eta\geq \max\{1-\frac{1}{q},1-\frac{1}{q^{1/2}}\}=1-\frac{1}{q}$. Analogously, one can prove that if $\eta\geq 1-\frac{1}{q^{3/2}}$ we obtain
\begin{align*}
	\frac{(1-\eta^2)\alpha}{1-\alpha^2\eta^2}+\frac{(1-\alpha^2)\eta}{1-\alpha^2\eta^2}&=\frac{\alpha+\eta}{1+\alpha\eta}\geq 1-\frac{1}{q^{\frac{3}{2}}}\\
	\frac{(1-\eta^2)\alpha}{1-\alpha^2\eta^2}-\frac{(1-\alpha^2)\eta}{1-\alpha^2\eta^2}&=\frac{\alpha-\eta}{1-\alpha\eta}\leq 1-\frac{1}{q}
\end{align*}
for $\alpha\in [0,1-\frac{1}{q^{1/2}}]$.

Bearing this in mind, it follows that if $1-\frac{1}{q^{k-\frac{1}{2}}}\leq \alpha\leq 1-\frac{1}{q^{k+\frac{1}{2}}}$, $k=1,2,\dots$, we have
\begin{align*}
m(G_c\cap \Delta(\alpha,\eta))&\geq m\left(\left\{z\in\Delta(\alpha,\eta): 1-\frac{1}{q^k}\leq |z|\leq 1-\frac{1}{q^{k+\frac{1}{2}}}, k=1,2,\dots\right\}\right)\\
&\geq m\left(D\left(1-\frac{q^{1/2}+1}{2q^{k+\frac{1}{2}}},\frac{q^{1/2}-1}{2q^{k+\frac{1}{2}}}\right)\right)= \frac{(q^{1/2}-1)^{2}}{4q^{2k+1}}\\
&\geq \frac{(q^{1/2}-1)^{2}}{4q^{2k+1}} q^{2k-1}(1-\alpha)^{2}= \frac{(q^{1/2}-1)^{2}}{4q^2} (1-\alpha)^2
\end{align*}
for $\eta \geq 1-\frac{1}{q}$. The remaining case ($0\leq \alpha\leq 1-\frac{1}{q^{1/2}}$) follows in the same manner, that is,  
	\begin{align*}
	m(G_c\cap \Delta(\alpha,\eta))&\geq m\left(\left\{z\in\Delta(\alpha,\eta): 1-\frac{1}{q^k}\leq |z|\leq 1-\frac{1}{q^{k+\frac{1}{2}}}, k=1,2,\dots\right\}\right)\\
	&\geq m\left(D\left(1-\frac{q^{1/2}+1}{2q^{\frac{3}{2}}},\frac{q^{1/2}-1}{2q^{\frac{3}{2}}}\right)\right)= \frac{(q^{1/2}-1)^{2}}{4q^{3}}\\
	&\geq  \frac{(q^{1/2}-1)^{2}}{4q^{3}} (1-\alpha)^{2}
\end{align*}
for $\eta\geq 1-\frac{1}{\eta^{3/2}}$.
\end{proof}

\noindent {\bf Acknowledgments.} We are grateful to Professor Manuel D. Contreras and Professor Luis Rodríguez-Piazza for their useful comments.



\begin{thebibliography}{99}
	
	\bibitem{Aguilar}  T. Aguilar-Hern\'andez, {\sl Mixed norm spaces and $RM(p,q)$ spaces},  Rev Mat Complut (2023). doi:\href{https://doi.org/10.1007/s13163-023-00466-1}{10.1007/s13163-023-00466-1}.
	
	\bibitem{Aguilar-Contreras-Piazza} T. Aguilar-Hern\'andez, M.D. Contreras, and L. Rodr\'\i guez-Piazza,  {\sl Average radial integrability spaces of analytic functions},
	J. Funct. Anal. \textbf{282}, 1 (2022), 109262.
	
	\bibitem{Aguilar-Contreras-Piazza_2}T. Aguilar-Hern\'andez, M. D. Contreras, and L. Rodr\'\i guez-Piazza, {\sl Integration operators in average radial integrability spaces of analytic functions,}  Mediterr. J. Math. \textbf{18} (2021), 117.
	
	\bibitem{Aguilar-Galanopoulos} T. Aguilar-Hern\'andez and P. Galanopoulos, {\sl Average radial integrability
	spaces, tent spaces and integration operators}, J. Math. Anal. Appl. \textbf{523}(2) (2023), 127028.
	
	\bibitem{AS} A. Aleman and A.G. Siskakis, {\sl An integral operator on $H^p$}, Complex Variables
	Theory Appl. {\bf28} (1995), 149--158.
	
	\bibitem{AS2} A. Aleman and A.G. Siskakis, {\sl Integration operators on Bergman spaces}, Indiana Univ. Math. J. {\bf46} (1997), 337--356.
	
	\bibitem{Anderson} A. Anderson, {\sl Some Closed Range Integral Operators on Spaces of Analytic Functions}, Integr. Equ. Oper. Theory \textbf{69} (2011), 87--99.
	
	\bibitem{Arsenovic} M. Arsenović, {\sl  Embedding derivatives of $\mathcal{M}$-Harmonic Functions into $L^p$ Spaces}, Rocky Mountain J. Math. {\bf 29} (1999), 149--158.
	
	

	\bibitem{Bracci_Contreras_Diaz-Madrigal} F. Bracci, M.D. Contreras, and S. Díaz-Madrigal, {\sl Continuous Semigroups of Holomorphic Self-maps of the Unit Disc}, Springer Monographs in Mathematics, 2020.
	
	
	
	
	
	\bibitem{Cohn_Verbitsky_200}  W.S. Cohn and I. E. Verbitsky, {\sl Factorization of tent spaces and Hankel operator}, J. Funct. Anal. {\bf 175} (2000), 308--329.
	
	\bibitem{CMS} R.R. Coifman, Y. Meyer, and E.M. Stein, {\sl  Some new function spaces and their applications to harmonic analysis},
	J. Funct. Anal. {\bf 62} (1985), 304--335.
	
	\bibitem{Cowen_McCluer} C. Cowen and B. McCluer, {\sl Composition Operators on Spaces of Analytic Functions}, CRC Press, Boca Raton, 1995. 
	
	\bibitem{Du} P. Duren, {\sl Theory of $H^p$ spaces}, Dover, New York, 2000.
	
	
	\bibitem{duren_schuster_2004} P. Duren and A. Schuster, {\sl Bergman Spaces}, Amer. Math. Soc., 2004.
	
	
	\bibitem{Gi} D. Girela, {\sl Analytic functions of bounded mean oscillation}, in: R. Aulaskari (Ed.), Complex Function Spaces, Mekrijärvi	1999, in: Univ. Joensuu Dept. Math. Rep. Ser., vol. 4, Univ. Joensuu, Joensuu, 2001, 61--170.
	
	
	%
	
	
	
	\bibitem{Jevtic_1996} M. Jevtić, {\sl  Embedding derivatives of $\mathcal{M}$-harmonic Hardy spaces $H^p$ into Lebesgue spaces, $0<p<2$}, Rocky Mountain J. Math. {\bf 26} (1996), 175--187.
	
	
	\bibitem{Luecking} D.H. Luecking, {\sl Embedding Theorem for Spaces of Analytic Functions via Khinchine's Inequality,} Michigan Math. J. {\bf 40} (1993), 333--358.
	
	
	\bibitem{Luecking_1981} D.H. Luecking, {\sl Inequalities on Bergman Spaces}, Illinois J. Math. \textbf{25}(1) (1981), 1--11.
	
	\bibitem{Luecking1987} D.H. Luecking, {\sl Trace Ideal Criteria for Toeplitz Operators,} J. Funct. Anal. {\bf 73} (1987), 345--368.
	
	\bibitem{MiihkinenPauPeralaWang2020} S. Miihkinen, J. Pau, A. Perälä, and M. Wang, {\sl Volterra type integration operators from Bergman spaces to Hardy spaces,}  J. Funct. Anal. {\bf 279} (2020), 108564.
	
	\bibitem{Panteris} K. Panteris, {\sl Closed range integral operators on Hardy, BMOA and Besov spaces}, Complex Variables and Elliptic Equations {\bf 67} (2021), 2011--2029.
	
	
	
	
	\bibitem{Pavlovic} M. Pavlovi\'c, {\sl On the Littlewood-Paley $g$-function and Calder\'on's area theorem}, Expo. Math. {\bf 31} (2013), 169--195. 
	
	
	\bibitem{Perala_2018} A. Perälä, {\sl Duality of holomorphic Hardy type tent spaces}, (2018). ArXiv:1803.10584, 2018.
	
	\bibitem{Pom} Ch. Pommerenke, {\sl Schlichte Funktionen und Funktionen von beschrankter mittler Oszilation}, Comment. Math. Helv. {\bf 52} (1977), 122--129.
	
	\bibitem{ramey_bounded_1991}
	W. Ramey and D. Ullrich, {\sl Bounded mean oscillation of Bloch pull-backs}, Math. Ann. \textbf{291} (1991), 591--606.
	
	
	
	
	
	\bibitem{Zhu} K. Zhu, {\sl Operator Theory in Function Spaces}, Amer. Math. Soc., 2007.
	
\end{thebibliography}
\end{document}